\newcommand{\supp}{\operatorname{supp}}
\renewcommand{\d}[0]{{\mathrm{d}}}
\newcommand{\ddt}{\frac{\d}{\d t}}
\newcommand{\R}[0]{\mathbb{R}}
\newcommand{\N}[0]{\mathbb{N}}
\newcommand{\qqand}[0]{ \qquad{\text{and}} \qquad}
\newcommand{\qand}[0]{ \quad{\text{and}} \quad}
\newcommand{\curlyF}{\mathcal{F}}
\newcommand{\curlyE}{\mathcal{E}}
\newcommand{\curlyP}{\mathcal{P}}
\newcommand{\curlyT}{\mathcal{T}}
\newcommand{\curlyL}{\mathcal{L}}
\newcommand{\ds}{\displaystyle}
\newcommand{\partialt}[1]{\dfrac{\partial#1}{\partial t}}
\newcommand{\fpartial}[1]{\dfrac{\partial}{\partial #1}}
\def\obel#1{
\begin{tabular}[t]{c} #1 \end{tabular}}
\newcommand{\eps}{\varepsilon}
\newcommand{\calX}{\mathcal{X}}
\newcommand{\id}{\operatorname{id}}
\newcommand{\matrixid}{\mathbbm{1}}
\newcommand{\indicator}{\mathbbm{1}}
\newcommand{\pt}{\frac{\partial}{\partial t}}
\newcommand{\px}{\frac{\partial}{\partial x}}
\newcommand{\Len}{\curlyF_{\rm{L}}}
\newcommand{\NLen}{\curlyF_{\rm{NL}}}
\newcommand{\curlyO}{\mathcal{O}}
\newtheorem{theorem}{Theorem}[section]
\newtheorem{lemma}[theorem]{Lemma}
\newtheorem{proposition}[theorem]{Proposition}
\newtheorem{remark}{Remark}
\title[Segregation effects and Gap Formation in Cross-Diffusion Models]{Segregation and Gap Formation in Cross-Diffusion Models}
\author[Martin Burger, Jos\'e A. Carrillo, Jan-Frederik Pietschmann, Markus Schmidtchen]{}
\subjclass[2010]{35B36, 	35K45, 35K65,	35Q92  }
\keywords{Nonlinear Cross-Diffusion, Degenerate Parabolic Equations, Segregated Solutions, Energy Minimisation, Pattern Formation.}
\email{martin.burger@fau.de}
\email{carrillo@imperial.ac.uk}
\email{jfpietschmann@math.tu-chemnitz.de}
\email{m.schmidtchen15@imperial.ac.uk}
\begin{document}
\maketitle
\centerline{\scshape Martin Burger}
\medskip
{\footnotesize
    \centerline{Department Mathematik, Friedrich-Alexander Universit\"at Erlangen-N\"urnberg}
    \centerline{Cauerstrasse 11, 91058 Erlangen, Germany}
}
\medskip

\centerline{\scshape Jos\'e A. Carrillo}
\medskip
{\footnotesize
    \centerline{Department of Mathematics, Imperial College London}
    \centerline{London SW7 2AZ, United Kingdom}
} 
\medskip

\centerline{\scshape Jan-Frederik Pietschmann}
\medskip
{\footnotesize
    \centerline{Fakult\"at f\"ur Mathematik, Technische Universit\"at Chemnitz}
    \centerline{Reichenhainer Stra\ss{}e 41, Chemnitz, Germany}
}
\medskip

\centerline{\scshape Markus Schmidtchen}
\medskip
{\footnotesize
    \centerline{Department of Mathematics, Imperial College London}
    \centerline{London SW7 2AZ, United Kingdom}
}

\begin{abstract}
In this paper we analyse a class of nonlinear cross-diffusion systems for two species with local repulsive interactions that exhibit a formal gradient flow structure with respect to the Wasserstein metric. We show that systems where the population pressure is given by a function of the total population are critical with respect to cross-diffusion perturbations. This criticality is showcased by proving that adding an extra cross-diffusion term that breaks the symmetry of the population pressure in the system leads to completely different behaviours, namely segregation or mixing, depending on the sign of the perturbation.  We show these results at the level of the minimisers of the associated free energy functionals. We also analyse certain implications of these results for the gradient flow systems of PDEs associated to these functionals and we present a numerical exploration of the time evolution of these phenomena.
\end{abstract}
\section{Introduction}
This paper is dedicated to studying the following system of cross-diffusion equations for two densities $\rho=\rho(x,t), \eta=\eta(x,t)$,
\begin{align}
\label{eq:system}
\begin{split}
	\pt \rho &= \px \left(\rho \px \left(\frac{\delta \curlyF}{\delta \rho}\right)\right) = 
	\fpartial x \left(\rho \fpartial x \left((1+\delta)\rho  + \eta\right)\right) ,\\
	\pt \eta &= \px \left(\eta \px \left(\frac{\delta \curlyF}{\delta \eta}\right)\right) = \fpartial x \left(\rho \fpartial x \left(\rho  + (1+\delta)\eta\right)\right)
\end{split}
\end{align}
on a bounded interval $\Omega = (-L,L)$. Here $\delta\curlyF/\delta \rho$ and $\delta\curlyF/\delta \eta$ denote the formal Fr\'echet derivative of either of the functionals $\curlyF \in \{\Len,\NLen\}$,
\begin{align*}
	\Len(\rho,\eta) = \frac{1+\delta}{2}\int_\Omega	(\rho+\eta)^2\d x - \delta \int_\Omega \rho \eta\d x.
\end{align*}
We also study a non-local variation thereof, given by
\begin{align*}
	\NLen(\rho,\eta) = \frac{1+\delta}{2}\int_\Omega	(\rho+\eta)^2\d x - \delta \int_\Omega \rho (K\star \eta)\d x.
\end{align*}
Here $\delta \in (-1,\infty)$ is a model parameter and we like to think of the kernel $K\in L^1(\R; \R_+)$ as a decaying function, in its radial variable, approximating a Dirac measure with unit mass at the origin. We will refer to the case $\delta=0$ as the symmetric case or the critical case thereafter.

Models of this kind have appeared in many mathematical biology contexts: collective behaviour \cite{TBL06,BCM07,BFH14}, cell adhesion models \cite{PBSG15,MT15,BDFS17,CMSTT19,NeuroBio19}, animal patterning \cite{VS15}, and cancer invasion models \cite{CL05,GC08,DomTruGerCha} to name a few. The main modelling reason of the symmetry $\delta=0$ in the cross-diffusion terms is that the local nonlinear diffusion terms in the system arise from the localised repulsion produced by the total population resistance to be squeezed. In other words, these terms should model volume exclusion or size effects in the underlying particle models, and therefore these effects should be independent of the type of particles under consideration, and consequently, these volume exclusion terms should be symmetric by permutation of the species labels. These models have been widely used together with nonlocal terms in order to show cell sorting by adhesion in mathematical biology \cite{PBSG15,MT15,BDFS17,CMSTT19}. In such systems segregation can be shown rigorously if there are different long-range aggregation forces \cite{BDFS17}. The present work shows that desymmetrising the critical case $\delta=0$ and the gradient flow structure using nonlocal interactions is precisely the source of the richness of patterns obtained in those models in mathematical biology. 

In fact, we show that adding $\delta>0$ cross-diffusion perturbations lead to total mixing of the populations while $-1<\delta<0$ cross-diffusion perturbations lead to segregation in terms of the minimisers, candidates to be stable stationary states of the associated gradient flows, of both the local and the nonlocal perturbations. This is in contrast with the critical case $\delta=0$ in which both mixing and segregation can occur for minimisers, phenomena also observed in our numerical experiments in the corresponding gradient flows. Actually, the local model corresponding to the functional $\curlyF = \Len$ has a diffusion matrix given by
\begin{align*}
	\det 
	\left(
		\begin{array}{cc}
			(1+\delta)\rho & \rho\\
			\eta & (1+\delta)\eta
		\end{array}
	\right)	= \delta (2+\delta) \rho\eta.
\end{align*}
As $\rho,\eta$ are non-negative densities, it is easy to identify three different parameter regimes for $\delta \in (-1,\infty)$ --- the case of $\delta>0$, the critical case of $\delta = 0$ and the case $\delta\in(-1,0)$. The first case is well studied in literature, cf. \cite{Jun15,DFEF17, LM11} since the diffusion matrix is positive definite. The critical case $\delta=0$ has been studied in terms of the free boundaries between segregated initial data in \cite{BGHP85, BGH87,BGH87a,BHIM12}. In \cite{CFSS17} a well-posedness result for the system in one dimension with reaction terms allowing for segregated initial data and global BV-bounds was given by means of splitting and optimal transport techniques, while \cite{Gwiazda2018} recently obtained global existence results in more dimensions under more restrictive assumptions on the reaction terms given by non-increasing functions of the pressure. The criticality of the symmetric case $\delta=0$ is understood in terms of the bifurcation in the overall behaviour both at the level of the energies as well as the PDEs associated to them. Note that $\delta \leq -1$ is not a reasonable case for the parabolic system, since then even the self-diffusion is backward.

In this paper, we mainly focus on the case $\delta \in (-1,0)$, which, to the best of our knowledge, has not been studied in the literature.  In this case the determinant is negative whenever both species mix thus indicating a backward diffusion regime. However, it vanishes if and only if both species are segregated. 
Thus the system is initially well-posed if and only both species are initially segregated. It is the aim of this paper to study this case both analytically and numerically at both the level of the PDE associated to the energies $\curlyF\in\{\Len, \NLen\}$ as well as on the level of the energies themselves. We generalise results in the critical case $\delta=0$ to the local and nonlocal cases for $-1<\delta < 0$, showing that all minimisers of the energies are segregated even showing a positive gap between both species in the nonlocal cases depending on the value of $\delta$. To be precise we summarise our main results in terms of minimisers of the free energies in the next table showing the criticality of the symmetric case $\delta=0$.

\begin{table}[h!]
{\small
\begin{tabular}{c|ccc|ccc}
 \multicolumn{1}{c|}{Free Energy}  & \multicolumn{3}{c|}{\textbf{local}} &  \multicolumn{3}{c}{\textbf{non-local}} \\ \hline
 \multicolumn{1}{c|}{Perturbations}  &  \multicolumn{1}{c|}{$\delta > 0$}   & \multicolumn{1}{c|}{$\delta = 0$} & \multicolumn{1}{c|}{$\delta < 0$}  &  \multicolumn{1}{c|}{$\delta > 0$}  & \multicolumn{1}{c|}{$\delta = 0$} & \multicolumn{1}{c}{$\delta < 0$}\\\hline 
\multicolumn{1}{c|}{} &  \multicolumn{1}{c|}{} & \multicolumn{1}{c|}{} & \multicolumn{1}{c|}{} & \multicolumn{1}{c|}{} & \multicolumn{1}{c|}{} & \\[-1mm]
\textbf{convexity} & \multicolumn{1}{c|}{strictly convex}  & \multicolumn{1}{c|}{convex} & nonconvex & \multicolumn{1}{c|}{strictly convex}  & \multicolumn{1}{c|}{convex} & nonconvex\\[1mm]
 \textbf{minimisers} & \multicolumn{1}{c|}{unique}  & \multicolumn{1}{c|}{at least one } & at least one &\multicolumn{1}{c|}{unique}  & \multicolumn{1}{c|}{at least one} & at least one\\[1mm]
 \textbf{gaps} & \multicolumn{1}{c|}{mixing}  & \multicolumn{1}{c|}{both} &  {segregation} & \multicolumn{1}{c|}{mixing}  & \multicolumn{1}{c|}{both} & \obel{segregation \\ (possibly gaps)}
\end{tabular}
\label{tab:propertiesenergies}
\bigskip}
\caption{Properties of minimisers to the local ($\Len$) and non-local energies ($\NLen$).}
\end{table}

The rest of this paper is organised as follows. In Section \ref{sec:energy}, we study minimisers of the free energies $\mathcal F$ both in the local and nonlocal cases. We analyse the properties of the functionals emphasising the analysis of mixing and segregation phenomena and the study of gaps between the species forming in the non-local case. In the final Section \ref{sec:numerics}, we present extensive numerical results, both for the energy and the associated PDEs discussing several open problems related to the segregation and mixing phenomena in the evolutions.

\section{Properties and minimisers of the free energies}\label{sec:energy}
This section is dedicated to a study of local minimisers of the free energies: the local $\Len$, and its corresponding nonlocal counterpart $\NLen$ where the minimisation problem reads
\begin{align*}
	(\rho, \eta) \in \mathrm{argmin}_{(\bar \rho, \bar \eta) \in \calX}\ \, \curlyF(\bar \rho, \bar\eta),
\end{align*}
with the set of feasible minimisers given by
\begin{align}
\calX = \left\{ (\rho, \eta) \in L_+^2(\Omega)\times L_+^2(\Omega)\;\big|\, \int_\Omega \rho \,\d x= m_1,\, \int_\Omega\, \eta \d x= m_2\right\}.
\end{align} 
The main properties have been summarised in table \ref{tab:propertiesenergies} above. We will analyse these properties in a precise way in the next subsections.

\subsection{The local case -- $\Len$}\label{sec:local_energy}
Let us recall the local energy functional
\begin{align}
	\label{eq:local_energy}
	\Len(\rho,\eta) = \frac{1+\delta}{2}\int_\Omega	(\rho(x)+\eta(x))^2\d x - \delta \int_\Omega \rho(x)\,\eta(x)\d x.
\end{align}
As mentioned in the introduction there are three different parameter regimes $\delta>0$, the critical case $\delta=0$, and finally $\delta\in(-1,0)$. In the case $\delta < -1$ it is easy to see by choosing two blow-up sequences of densities with disjoint support that the infimum of $ \Len$ equals $-\infty$. In the case $\delta = -1$ it is easy to see that infimum is zero and indeed every pair with disjoint support is a minimiser. For the other parameter cases we obtain an existence result:
\begin{theorem}[Existence of minimisers]\label{thm:exminlocal} For $\delta \in (-1,\infty)$, minimisers to the problem \eqref{eq:local_energy} with $\curlyF = \Len$ exist in the set $\mathcal X$ and have the following properties:
\begin{enumerate}
 \item For $\delta > 0$, the unique minimisers of \eqref{eq:local_energy} are given by
\begin{align*}
  \rho = \frac{m_1}{|\Omega|},\qand \eta = \frac{m_2}{|\Omega|}.
\end{align*}
In particular, both species are fully mixed.\\
\item For $\delta = 0$, there exits an infinite family of minimisers to \eqref{eq:local_energy}. Any local minimiser satisfies $\supp \rho \cup \supp \eta = \bar \Omega$ and 
\begin{align*}
 \sigma = \rho + \eta = \frac{m_1 + m_2}{|\Omega|} = \text{const}.
\end{align*}
In particular, both segregation and mixing is possible. Moreover, any local minimiser is a global minimiser.\\
\item For $-1 < \delta < 0$, there exists an infinite family of minimisers to \eqref{eq:local_energy}. Furthermore any minimiser satisfies $\supp \rho \cup \supp \eta = \bar \Omega$ and 
\begin{align*}
\sigma = \rho + \eta = \frac{m_1 + m_2}{|\Omega|} = \text{const}.
\end{align*}
However, there holds $|\supp \rho \cap \supp \eta| = 0$, so that minimisers are always segregated.
\end{enumerate}
\end{theorem}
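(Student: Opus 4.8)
\emph{Proof proposal.} The plan is to diagonalise the quadratic form by passing to the variables $\sigma = \rho+\eta$ and $w = \rho-\eta$. Writing $\rho = (\sigma+w)/2$ and $\eta = (\sigma-w)/2$, the map $(\rho,\eta)\mapsto(\sigma,w)$ is a linear bijection of $L^2(\Omega)\times L^2(\Omega)$ onto itself, under which $\calX$ corresponds to the set of pairs with $\int_\Omega \sigma\,\d x = m_1+m_2 =: M$, $\int_\Omega w\,\d x = m_1-m_2$ and $|w|\le\sigma$ almost everywhere (this last inequality being exactly the constraint $\rho,\eta\ge 0$, and it forces $\sigma\ge 0$ automatically). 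Using $\rho\eta = \tfrac14(\sigma^2-w^2)$ one gets the identity
\begin{align*}
	\Len(\rho,\eta) = \frac{2+\delta}{4}\int_\Omega \sigma^2\,\d x + \frac{\delta}{4}\int_\Omega w^2\,\d x ,
\end{align*}
which drives the whole proof; note that $2+\delta>0$ for every $\delta\in(-1,\infty)$. Throughout I assume $m_1,m_2>0$, the cases $m_1m_2=0$ being trivial.

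For $\delta>0$ both terms are nonnegative, so Jensen's inequality applied separately to $\sigma$ and to $w$ gives $\Len(\rho,\eta)\ge \frac{2+\delta}{4}\frac{M^2}{|\Omega|}+\frac{\delta}{4}\frac{(m_1-m_2)^2}{|\Omega|}$, with equality iff both $\sigma$ and $w$ are constant; the constant choice is admissible because $|m_1-m_2|\le m_1+m_2$, and it is precisely $\rho = m_1/|\Omega|$, $\eta = m_2/|\Omega|$. Uniqueness follows since the pointwise Hessian of the integrand of $\Len$ in $(\rho,\eta)$ has eigenvalues $2+\delta$ and $\delta$, both positive, so $\Len$ is strictly convex on the convex set $\calX$. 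For $\delta=0$ the functional only sees $\sigma$, so Jensen yields $\Len\ge\tfrac12 M^2/|\Omega|$ with equality iff $\sigma\equiv M/|\Omega|$; the minimisers are thus exactly the admissible pairs with $\rho+\eta\equiv M/|\Omega|$, which form an infinite family and include both mixed states ($w$ constant) and segregated ones ($w=\pm\sigma$ on complementary sets). Since a positive constant $\sigma$ cannot vanish on a subset of positive measure, $\supp\rho\cup\supp\eta=\bar\Omega$; and convexity of $\Len$ together with convexity of $\calX$ makes every local minimiser global.

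For $-1<\delta<0$ I rewrite $\frac{\delta}{4}\int_\Omega w^2\,\d x = -\frac{|\delta|}{4}\int_\Omega w^2\,\d x$ and use the pointwise bound $w^2\le\sigma^2$ (from $|w|\le\sigma$) to obtain
\begin{align*}
	\Len(\rho,\eta) \ge \frac{2+\delta-|\delta|}{4}\int_\Omega \sigma^2\,\d x = \frac{1+\delta}{2}\int_\Omega \sigma^2\,\d x \ge \frac{1+\delta}{2}\frac{M^2}{|\Omega|},
\end{align*}
the last inequality by Jensen. Equality forces simultaneously $\sigma\equiv M/|\Omega|$ and $w^2=\sigma^2$ a.e.; the latter is the same as $\rho\eta=\tfrac14(\sigma^2-w^2)=0$ a.e., i.e. $|\supp\rho\cap\supp\eta|=0$. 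To see the bound is attained, choose any measurable $A\subseteq\Omega$ with $|A|=m_1|\Omega|/M$ and set $\rho=\frac{M}{|\Omega|}\indicator_A$, $\eta=\frac{M}{|\Omega|}\indicator_{\Omega\setminus A}$; this pair lies in $\calX$ and a direct computation gives $\Len = \frac{1+\delta}{2}M^2/|\Omega|$, so it is a minimiser, and the arbitrariness of $A$ yields infinitely many. Conversely, any minimiser saturates both displayed inequalities, hence has $\sigma\equiv M/|\Omega|$ (so $\supp\rho\cup\supp\eta=\bar\Omega$ as before) and $\rho\eta=0$ a.e., i.e. it is segregated.

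I expect the only genuinely delicate point to be the regime $\delta<0$: there $\Len$ is nonconvex and not coercive along the $w$-direction, so the abstract direct method does not immediately give existence. The way around this is the sharp pointwise inequality $w^2\le\sigma^2$, which recasts the $w$-part as a controlled negative perturbation of the coercive, strictly convex $\sigma$-part and reduces the minimisation to Jensen for $\sigma$; one then has to check that the two equality conditions — $\sigma$ constant and full segregation — are simultaneously compatible with the mass constraint $\int_\Omega w\,\d x = m_1-m_2$, which is exactly what the construction of the set $A$ (with $|A|=m_1|\Omega|/M<|\Omega|$) verifies.
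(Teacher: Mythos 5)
Your proposal is correct, and it takes a genuinely different route from the paper. You diagonalise the quadratic form in the variables $\sigma=\rho+\eta$, $w=\rho-\eta$, obtaining $\Len=\tfrac{2+\delta}{4}\int\sigma^2+\tfrac{\delta}{4}\int w^2$ with the admissibility constraint $|w|\le\sigma$, and then everything reduces to Jensen's inequality plus the pointwise bound $w^2\le\sigma^2$; the identity is right, the equality cases are handled correctly, and the explicit segregated competitors $\rho=\tfrac{M}{|\Omega|}\indicator_A$ with $|A|=m_1|\Omega|/M$ do attain the lower bound $\tfrac{1+\delta}{2}M^2/|\Omega|$, which neatly sidesteps the failure of the direct method caused by the lack of weak lower semicontinuity. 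The paper instead argues by local perturbations: it rules out vacuum and mixing by explicit mass-rearrangement competitors, derives Euler--Lagrange conditions on and off the supports to show $\sigma$ is constant, and for $\delta<0$ compares $\Len$ with the relaxed functional $\curlyE=\tfrac{1+\delta}{2}\int\sigma^2$ --- which is in substance the same observation as your inequality $\Len\ge\tfrac{1+\delta}{2}\int\sigma^2$ on segregated states, but reached less explicitly. Your approach buys shorter proofs, the exact minimal values, and a transparent explanation of why $\delta=0$ is critical (the sign of the coefficient of $\int w^2$ flips); the paper's perturbation machinery buys two things your diagonalisation does not: it applies verbatim to \emph{local} minimisers even in the nonconvex regime $\delta<0$ (your Jensen argument only characterises global minimisers there, whereas for $\delta=0$ you correctly recover local minimisers via convexity), and it transfers to the nonlocal functional $\NLen$, where no such exact diagonalisation is available. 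Since part (3) of the statement concerns minimisers of the argmin problem, your proof does cover the claim as written; just be aware that any strengthening to local minimisers for $\delta<0$ would require the paper's perturbative argument.
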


\begin{proof}
We will address each statement individually.\\

\textbf{Ad (1):}  Let $\delta > 0$. In this case, we note that the energy functional $\Len$ is strictly convex which directly yields existence and uniqueness of the minimiser. As can be seen easily,  $\rho = m_1 |\Omega|^{-1}$ and $\eta = m_2 |\Omega|^{-1}$ are critical points of the energy which yields the first statement.\\[1em]

\textbf{Ad (2):} Let $\delta = 0$. We begin with the properties of the minimisers and show existence later. To this end we  assume that $(\rho, \eta)\in \calX$ is a minimiser of the energy $\Len$ such that  there exists an open set
	\begin{align*}
		\mathring{D} \subset \bar \Omega\setminus(\supp(\rho) \cup \supp(\eta)),
	\end{align*}
	i.e., there are regions ssof vacuum in $\bar \Omega$. Furthermore let $B_1\subset \supp(\rho)$, $B_2 \subset \supp(\eta)$, and $B_3 \subset \mathring{D}$ be sets of equal Lebesgue measure such that
	\begin{align*}
		\rho\big|_{B_1} > \alpha, \quad \eta\big|_{B_2} > \alpha, \qand \rho\big|_{B_3} = \eta\big|_{B_3} =0,
	\end{align*}
	almost everywhere for some $\alpha>0$. Furthermore we set
	\begin{align*}
		\rho^\epsilon &= \rho - \epsilon \matrixid_{B_1} + \epsilon \matrixid_{B_3},\\
		\eta^\epsilon &= \eta - \epsilon \matrixid_{B_2} + \epsilon \matrixid_{B_3},
	\end{align*}
	for $0<\epsilon<\alpha$. Note that the perturbed minimisers have the same mass and remain non-negative. In addition, we note that $\rho^\epsilon + \eta^\epsilon = \rho + \eta$.

	Then there holds
	\begin{align*}
		\Len(\rho^\epsilon, \eta^\epsilon) &= \frac12 \int_{\Omega\setminus(B_1\cup B_2\cup B_3)} \sigma^2 \d x +\frac12 \int_{B_1} (\rho -\epsilon + \eta)^2\d x +\frac12 \int_{B_2} (\rho+\eta-\epsilon)^2\d x +\frac12 \int_{B_3} (\epsilon +\epsilon)^2\d x\\
		&=\Len(\rho, \eta) - \epsilon \left(\int_{B_1}\sigma \d x+ \int_{B_2}\sigma\d x \right) + 3 \epsilon^2\,|B_3|\\
		&< \Len(\rho, \eta), 
	\end{align*}
	for $\epsilon>0$ small enough. 
	Thus we have constructed a better minimiser which is a contradiction. Hence any minimiser, $(\rho, \eta)$, occupies the entire domain up to a set of measure zero, i.e., $\supp(\rho) \cup \supp(\eta) = \Omega$ up to a set of zero Lebesgue measure. Finally, let us show that any minimiser satisfies
\begin{align}
   	\label{eq:necessary_condition_minr}
	\sigma = \frac{m_1 + m_2}{|\Omega|}.
\end{align}
In order to obtain more information, we need to deduce Euler-Lagrange conditions for local minimisers of the free energies. We will do suitable perturbations of the densities following the blueprints of \cite{Strohmer2008,BCLR,CCV,CHVY,CCH16a,CCH16b,CHMV} in related cases. Let us begin by computing variations of the energy with respect to the first species, $\rho$.
\begin{align}
	\label{eq:ELequation}
	\frac1\epsilon \left(\Len(\rho + \epsilon \phi, \eta) - \Len(\rho, \eta)\right) = 2\int (\rho + \eta) \phi \d x + \curlyO(\epsilon).
\end{align}
The perturbation must be chosen carefully lest the positivity and the mass constraint be violated. To this end let us set
\begin{align*}
	\phi(x) = \rho(x) \left(\Psi(x) - \frac{1}{m_1} \int_\Omega \Psi(y)\rho(y)\d y\right),
\end{align*}
for some $\Psi \in C_0^\infty(\Omega)$ and $0<\epsilon< \|\Psi\|_{L^\infty}^{-1}$. Substituting this into the first variation we obtain
\begin{align*}
	0 &=2\int_\Omega (\rho + \eta) \rho(x) \Psi(x) - \frac{1}{m_1}\rho(x) (\rho(x) + \eta(x))\int_\Omega \Psi(y)\rho(y)\d y \d x \\
	&= 2 \int_\Omega \Psi(x) \rho(x)\left[ (\rho(x) + \eta(x)) - \frac{1}{m_1}\int_\Omega (\rho(y) + \eta(y)) \rho(y) \d y\right] \d x,
\end{align*}
whence, on $\supp(\rho)$, we have
\begin{align*}
	\rho + \eta  = \frac{1}{m_1} \int_\Omega (\rho + \eta) \rho \d y. 
\end{align*}
A similar perturbation with respect to the second species yields
\begin{align*}
	\rho + \eta  = \frac{1}{m_2} \int_\Omega (\rho + \eta) \rho \d y,
\end{align*}
on $\supp(\eta)$.

Next, we choose a different perturbation, $\phi$, in \eqref{eq:ELequation} in order to obtain information of the minimisers outside of their respective supports. To this end we set
\begin{align*}
	\phi(x) = m_1 \Psi(x) - \rho(x) \int_\Omega \Psi(y)\d y,
\end{align*}
for some $\Psi\in C_0^\infty(\Omega)$, which is preserves the mass and positivity of $\rho$ if $0<\epsilon<\|\Psi\|_{L^1}^{-1}$. Using this in the Euler-Lagrange equation yields
\begin{align*}
	0 &\leq 2\int_\Omega (\rho(x)+\eta(x))m_1 \Psi(x)- (\rho(x)+\eta(x))\rho(x) \int_\Omega \Psi(y)\d y\d x \\
	&= 2 \int_\Omega \Psi(y)\left[\sigma(y) m_1 - \int_\Omega \sigma(x) \rho(x)\d x \right]\d y,
\end{align*}
and we conclude
\begin{align*}
	\frac{1}{m_1} \int_\Omega \rho^2(x)\d x \leq 	\frac{1}{m_1} \int_\Omega \sigma(x)\rho(x) \d x \leq \sigma(x) = \eta(x),
\end{align*}
for almost every $x \notin \supp(\rho)$. Similarly, we obtain
\begin{align*}
	\frac{1}{m_2} \int_\Omega \eta^2(x)\d x \leq 	\frac{1}{m_2} \int_\Omega \sigma(x)\eta(x) \d x \leq \sigma(x) = \rho(x),
\end{align*}
for almost ever $x \notin \supp(\eta)$. In summary, we have
\begin{align}
	\label{eq:2405191535}
	\left\{
	\begin{array}{rl}
		(\rho + \eta) = \ds \frac{1}{m_1} \int_\Omega \sigma \rho, & \mbox{in } \supp(\rho)\\[1em]
		(\rho + \eta) \geq \ds \frac{1}{m_1} \int_\Omega \sigma \rho, & \mbox{outside } \supp(\rho)\\[1.9em]
		(\rho + \eta) = \ds \frac{1}{m_2} \int_\Omega \sigma \eta, & \mbox{in } \supp(\eta)\\[1em]
		(\rho + \eta) \geq \ds  \frac{1}{m_2} \int_\Omega \sigma \eta, & \mbox{outside } \supp(\eta)
	\end{array}
	\right.
\end{align}
Finally, we perturb the functional in both variables at the same time, i.e.,
\begin{align}
	\frac1\epsilon \left(\Len(\rho + \epsilon \phi, \eta + \epsilon \phi) - \Len(\rho, \eta)\right) = 2\int (\rho + \eta) \phi \d x + \curlyO(\epsilon).
\end{align}
As before, choosing
\begin{align*}
	\phi(x) = (\rho +\eta) \left(\Psi(x) - \frac{1}{m_1 + m_2}\int_\Omega \Psi(y) (\rho(y)+\eta(y))\right),
\end{align*}
for some $\Psi\in C_0^\infty(\Omega)$ and $0< \epsilon < \|\Psi\|_{L^\infty}^{-1}$. Unlike the individual perturbation above, perturbing in the support of $(\rho + \eta)$ yields, by a computation similar to the one before, 
\begin{align*}
	\rho(x) + \eta(x) = \frac{1}{m_1 + m_2} \int_\Omega (\rho(y) + \eta(y))^2 \d y.
\end{align*}
Thus, there holds $\sigma(x) = (m_1 + m_2) / |\supp(\sigma)|$ on the support of $\sigma$.
Note that this implies that the constants in the individual Euler-Lagrange equations corresponding to $\rho$ and $\eta$ (see \eqref{eq:2405191535}) are indeed the same since
\begin{align*}
	\frac{1}{m_1}\int_\Omega \sigma \rho \d x = \frac{1}{m_1}\frac{m_1 + m_2}{|\supp(\sigma)|} \int_\Omega \rho \d x = \frac{1}{m_2}\frac{m_1 + m_2}{|\supp(\sigma)|} \int_\Omega \eta \d x = \frac{1}{m_2} \int_\Omega \sigma \eta \d x.
\end{align*}
Using the fact that $\supp(\sigma) = \Omega$ we have shown that any local minimiser of $\Len$ satisfies $\sigma=(m_1 + m_2)/|\Omega|$. 
Finally, let us show that any local minimiser of $\Len$ is a global minimiser. To this end consider a local minimiser $(\rho, \eta)$ and perturb it by $f,g\in L^2(\Omega)$ satisfying $f\neq -g$, as well as
\begin{align*}
	\int_\Omega f\,\d x = \int_\Omega g \,\d x= 0,	
\end{align*}
and 
\begin{align*}
  	\tilde \rho = \rho + f \geq 0\qand \tilde \eta = \eta + g \geq 0.
\end{align*}
But then
\begin{align*}
    \Len(\tilde \rho, \tilde \eta) &= \Len(\rho, \eta) + 2\int (\rho+\eta)(f+g)\d x + \int (f+g)^2\d x \\
    &=\Len(\rho, \eta) + 2 \frac{m_1 + m_2}{|\Omega|} \int  (f+g) \d x + \int (f+g)^2\d x \\
    &=\Len(\rho, \eta) + \int (f+g)^2\d x > \Len(\rho, \eta).
\end{align*}
Thus any feasible perturbation leads to a strict increase in the energy. \smallskip\\

\textbf{Ad (3):}  Let $\delta<0$. We start by showing existence of minimisers first. Let $(\rho, \eta)$ be any segregated minimiser of the energy
        \begin{align}
        		\label{eq:2405191527}
            \curlyE(\rho, \eta):=\frac{1+\delta}{2}\int_\Omega (\rho + \eta)^2 \d x
        \end{align}
        We claim that, as a matter of fact, $(\rho, \eta)$ is a minimiser of the functional $\Len$. Assume the contrary and let $(\tilde \rho, \tilde \eta)$ be a competitor that is not a minimiser of \eqref{eq:2405191527}, i.e.,
        \begin{align*}
                \Len(\tilde \rho, \tilde \eta) < \Len(\rho, \eta).
        \end{align*}
        Upon rearranging the terms there holds
        \begin{align*}
                0 \leq -\delta\int_\Omega \tilde \rho \tilde\eta\d x < -\delta \int \rho \eta \d x + \curlyE(\rho, \eta) - \curlyE(\tilde \rho, \tilde \eta) < 0,
        \end{align*}
        which is absurd. Thus $(\rho, \eta)$ is indeed a minimiser of $\Len$. Let us now show the properties that minimisers are segregated. To this end, let $(\rho, \eta)\in \calX$ be a minimiser of $\Len$. Assume $|\supp(\rho)\cap \supp(\eta)|>0$ with respect to the Lebesgue measure. 
        
\noindent
        Let us choose two disjoint sets of equal Lebesgue measure, $B_1, B_2$, in this region, i.e.,
	\begin{align}
		B_1 \dot \cup B_2 \subset \supp(\rho)\cap \supp(\eta),
	\end{align}
	such that $\rho, \eta > \epsilon>0$, for some $\epsilon>0$. As before, we set
\begin{align*}
	\rho^\epsilon &= \rho - \epsilon\matrixid_{B_1} +\epsilon\matrixid_{B_2},\\
	\eta^\epsilon &= \eta + \epsilon\matrixid_{B_1} -\epsilon\matrixid_{B_2},
\end{align*}
observing that the mass, non-negativity, and the sum remain conserved. The energy of the perturbation then reads
\begin{align*}
	\Len(\rho^\epsilon, \eta^\epsilon) &= \frac12 \int_\Omega(\rho +\eta)^2\d x - \delta \int_{\Omega\setminus(B_1 \cup B_2)} \rho\eta\d x\\
	&\quad - \delta \int_{B_1} (\rho - \epsilon)(\eta + \epsilon)\d x - \delta \int_{B_2} (\rho+\epsilon)(\eta - \epsilon)\d x \\
	&=\Len(\rho,\eta) -\delta \epsilon\left(\int_{B_1} (\eta-\rho) \d x + \int_{B_2}(\rho-\eta) \d x\right) + \delta \epsilon^2 (|B_1| + |B_2|).
\end{align*}
Thus, either the order $\epsilon$-terms vanish in which case we get a contradiction straight away as the order $\epsilon^2$-term is negative. Otherwise, the order $\epsilon$-term has a sign (and by possible switching $\epsilon\leftarrow -\epsilon$ we get a contradiction again, for we have constructed a better minimiser, which is absurd. 

Thus, energy minimisers are strictly segregated. By a similar argument, we can see that the minimisers are supported on the whole domain, for otherwise the energy could be decreased by shifting mass to void regions.
\end{proof}
This result is interesting in that it shows that the parameter choice $\delta=0$ is critical for the behaviour. Phase segregation is actually energetically favoured whenever $\delta \in (-1, 0)$ unlike in the case $\delta=0$ (cf.  \cite{BGH87,BGH87a,BHIM12, CFSS17}), where areas of coexistence may be observed. In the case of $\delta > 0$ only states that are completely mixed are preferred by the energy. In this sense $\delta = 0$ is borderline for the qualitative properties of minimiser which we illustrate in Figure \ref{fig:PDElocal_deltas}  for $\Len$ and Figure \ref{fig:PDEnonlocal} for $\NLen$, respectively..

\begin{proposition}[\label{prop:nonlsc}The energy $\Len$ is not weakly lower semicontinuous]
Let $-1<\delta<0$. Then the energy is not weakly lower semicontinuous.
\end{proposition}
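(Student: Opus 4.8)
The plan is to exhibit a sequence $(\rho_n,\eta_n)\in\calX$ converging weakly in $L^2(\Omega)\times L^2(\Omega)$ to some limit $(\rho_\infty,\eta_\infty)\in\calX$ along which the energy $\Len$ strictly decreases in the limit, i.e. $\liminf_n \Len(\rho_n,\eta_n) < \Len(\rho_\infty,\eta_\infty)$. The natural construction is to take segregated states that oscillate on finer and finer scales so that the two species interleave more and more, and in the weak limit become fully mixed. Concretely, on a partition of $\Omega$ into $2n$ subintervals of equal length, set $\rho_n = \sigma\,\indicator_{A_n}$ and $\eta_n = \sigma\,\indicator_{A_n^c}$, where $A_n$ is the union of the "odd" subintervals and $\sigma = (m_1+m_2)/|\Omega|$ is the constant total density (taking $m_1=m_2$ for simplicity, or adjusting the proportions of odd/even intervals otherwise). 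Each $(\rho_n,\eta_n)$ is segregated, so $\rho_n\eta_n\equiv 0$ and $\rho_n+\eta_n\equiv\sigma$; hence $\Len(\rho_n,\eta_n) = \frac{1+\delta}{2}\int_\Omega \sigma^2\,\d x$ for every $n$.

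Next I would identify the weak limit. Since the indicator functions $\indicator_{A_n}$ converge weakly-$\ast$ in $L^\infty$ (hence weakly in $L^2$ on the bounded domain) to the constant $\tfrac12$ by the Riemann–Lebesgue-type averaging lemma for rapidly oscillating periodic functions, we get $\rho_n \rightharpoonup \tfrac{\sigma}{2}$ and $\eta_n \rightharpoonup \tfrac{\sigma}{2}$ in $L^2(\Omega)$. The limit pair $(\rho_\infty,\eta_\infty) = (\tfrac\sigma2,\tfrac\sigma2)$ still lies in $\calX$ (same masses, still in $L^2_+$). Its energy is
\begin{align*}
	\Len\!\left(\tfrac\sigma2,\tfrac\sigma2\right) = \frac{1+\delta}{2}\int_\Omega \sigma^2\,\d x - \delta\int_\Omega \frac{\sigma^2}{4}\,\d x = \frac{1+\delta}{2}\int_\Omega \sigma^2\,\d x - \frac{\delta}{4}\int_\Omega\sigma^2\,\d x.
\end{align*}
Since $\delta<0$, the term $-\frac{\delta}{4}\int_\Omega\sigma^2\,\d x > 0$, so
\begin{align*}
	\Len\!\left(\tfrac\sigma2,\tfrac\sigma2\right) = \frac{1+\delta}{2}\int_\Omega\sigma^2\,\d x - \frac{\delta}{4}\int_\Omega\sigma^2\,\d x > \frac{1+\delta}{2}\int_\Omega\sigma^2\,\d x = \lim_{n\to\infty}\Len(\rho_n,\eta_n),
\end{align*}
which directly contradicts weak lower semicontinuity. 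This is the whole argument at the level of the energy values.

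The one point requiring care — and the step I expect to be the main (though modest) obstacle — is justifying rigorously that $\indicator_{A_n}\rightharpoonup \tfrac12$ in $L^2(\Omega)$. The clean way is to note that $\indicator_{A_n}$ is bounded in $L^\infty(\Omega)$, hence precompact in the weak-$\ast$ topology, and that for every subinterval $I$ one has $\int_I \indicator_{A_n}\,\d x \to \tfrac12|I|$ as $n\to\infty$ (each such $I$ eventually contains a nearly-equal number of odd and even fine subintervals, up to a boundary error of order $1/n$); since indicators of intervals span a dense set and test functions can be approximated by step functions, $\int_\Omega \indicator_{A_n}\varphi\,\d x \to \tfrac12\int_\Omega\varphi\,\d x$ for all $\varphi\in L^2$. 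Alternatively one can invoke the standard weak-convergence lemma for periodic oscillations: if $u\in L^\infty_{\mathrm{loc}}(\R)$ is $1$-periodic with mean $\bar u$, then $u(n\,\cdot)\rightharpoonup \bar u$. Everything else (mass preservation, nonnegativity, membership in $\calX$, the exact energy values) is an elementary computation that uses only segregation and $\rho_n+\eta_n\equiv\sigma$.
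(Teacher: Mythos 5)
Your proposal is correct and follows essentially the same route as the paper: both construct finely interleaved segregated indicator profiles with constant sum that converge weakly to the fully mixed pair $(\sigma/2,\sigma/2)$, whose energy exceeds the (constant) energies along the sequence. The only cosmetic difference is that you verify the strict energy jump by the explicit computation $\Len(\sigma/2,\sigma/2)-\lim_n\Len(\rho_n,\eta_n)=-\tfrac{\delta}{4}\int_\Omega\sigma^2\,\d x>0$, whereas the paper invokes the characterisation of minimisers in Theorem~\ref{thm:exminlocal} to conclude that the mixed limit cannot be a minimiser; your version is, if anything, more self-contained.
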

\begin{proof}
Without loss of generality we assume $\Omega= [0,1]$. The argument we employ can easily be generalised to any dimension and domain by following the same procedure. We construct two sequences
\begin{align*}
	\rho^n = \sum_{i=0}^{n-1} \matrixid_{[2i/(n+1),2i+1/(n+1))} \qand \eta^n = 1- \rho^n.
\end{align*}
By construction it is apparent that $\supp(\rho^n)\cap \supp(\eta^n)=\emptyset$ for all $n\in\N$. However, let us note that $\rho^n+\eta^n \equiv 1$ for all $n\in\N$. Hence we have constructed a sequence of minimisers converging weakly to the constant function
\begin{align}
	\rho^n,\eta^n\rightharpoonup 1/2, \qquad \text{in}\quad L^2(0,1).
\end{align}
However, the limiting function are fully mixed and in particular no minimisers by Theorem \ref{thm:exminlocal}. Hence the energy $\Len(\rho,\eta)$ is not sequentially weakly lower semicontinuous.
\end{proof}

\subsection{The non-local case -- $\NLen$}
This section is devoted to the study of the energy
\begin{align}
	\label{eq:nonlocenergy}
	\NLen(\rho,\eta)=  \frac{1+\delta}{2}\int (\rho+\eta)^2\d x  -  \delta \iint K(x-y)\rho(x)\eta(y)\d x \d y.
\end{align}
In the non-local case we have the following existence result, valid for all $\delta$-regimes: 
    \begin{lemma}[Existence of minimisers]\label{lem:nonlocal_deltaneg_existence_minimisers}
    	Let $\delta \in (-1,1]$ and $K\in L^1(\Omega)$ with $\|K\|_{L^1(\Omega)}=1$. Then the functional $\NLen$ has at least one minimiser.
    \end{lemma}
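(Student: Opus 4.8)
The plan is to apply the direct method of the calculus of variations; the one point that needs care is lower semicontinuity, since by Proposition~\ref{prop:nonlsc} the \emph{local} functional fails to be weakly lower semicontinuous for $\delta<0$, whereas here the nonlocal interaction term will turn out to be weakly continuous. First I would record a lower bound and coercivity. Writing the interaction term as $\iint K(x-y)\rho(x)\eta(y)\,\d x\,\d y=\int_\Omega\rho\,(K\star\eta)\,\d x$ (extending $\rho,\eta$ by zero to $\R$ and $K$ to an element of $L^1(\R;\R_+)$), non-negativity of $\rho,\eta,K$ gives $0\le\int_\Omega\rho\,(K\star\eta)\,\d x$, while Cauchy--Schwarz, Young's convolution inequality and $\|K\|_{L^1}=1$ give the upper bound $\|\rho\|_{L^2}\|\eta\|_{L^2}\le\tfrac12\|\rho+\eta\|_{L^2}^2$ (the last step using $\rho,\eta\ge0$). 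Distinguishing the signs of $\delta$, this yields $\NLen(\rho,\eta)\ge c\,\|\rho+\eta\|_{L^2}^2$ with $c=\tfrac12\min\{1,1+\delta\}>0$ because $\delta>-1$; in particular $\NLen\ge0$ on $\calX$, and evaluating at the constants $(m_1/|\Omega|,m_2/|\Omega|)$ shows the infimum $\mu:=\inf_{\calX}\NLen$ is finite. I would stress here that no convexity is needed: for $\delta<0$ the functional is genuinely nonconvex, but existence only asks for coercivity, weak closedness of $\calX$, and weak lower semicontinuity.

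Next I would take a minimising sequence $(\rho^n,\eta^n)\in\calX$ with $\NLen(\rho^n,\eta^n)\to\mu$. Coercivity bounds $\|\rho^n+\eta^n\|_{L^2}$, and since $0\le\rho^n\le\rho^n+\eta^n$ and $0\le\eta^n\le\rho^n+\eta^n$ pointwise, both sequences are bounded in $L^2(\Omega)$; by reflexivity, a subsequence satisfies $\rho^n\rightharpoonup\rho$, $\eta^n\rightharpoonup\eta$ weakly in $L^2(\Omega)$. That the limit stays in $\calX$ is routine: $L^2_+(\Omega)$ is convex and strongly closed, hence weakly closed, so $\rho,\eta\ge0$; and since $\Omega$ is bounded, $\indicator_\Omega\in L^2(\Omega)$, so testing the weak convergence against $\indicator_\Omega$ gives $\int_\Omega\rho\,\d x=m_1$ and $\int_\Omega\eta\,\d x=m_2$.

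The heart of the argument is to show $\int_\Omega\rho^n\,(K\star\eta^n)\,\d x\to\int_\Omega\rho\,(K\star\eta)\,\d x$. Here the plan is to exploit the smoothing effect of convolution with an $L^1$ kernel: by Young's inequality $\{K\star\eta^n\}$ is bounded in $L^2$, and $\|(K(\cdot+h)-K)\star\eta^n\|_{L^2}\le\|K(\cdot+h)-K\|_{L^1}\,\|\eta^n\|_{L^2}\to0$ as $h\to0$, uniformly in $n$, by continuity of translation in $L^1$; since $\Omega$ is bounded, the Fr\'echet--Kolmogorov theorem yields a further subsequence with $K\star\eta^n\to\xi$ strongly in $L^2(\Omega)$, and testing against $L^2$ functions together with $\eta^n\rightharpoonup\eta$ identifies $\xi=K\star\eta$. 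Strong convergence of $K\star\eta^n$ paired with weak convergence of $\rho^n$ then gives the desired limit. The remaining quadratic part $\tfrac{1+\delta}{2}\|\rho+\eta\|_{L^2}^2$ is weakly lower semicontinuous, being a non-negative (as $1+\delta>0$) multiple of the squared $L^2$-norm composed with the weakly continuous linear map $(\rho,\eta)\mapsto\rho+\eta$. Adding up, $\NLen(\rho,\eta)\le\liminf_n\NLen(\rho^n,\eta^n)=\mu$, and since $(\rho,\eta)\in\calX$ this forces $\NLen(\rho,\eta)=\mu$, i.e.\ $(\rho,\eta)$ is a minimiser.

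The main obstacle is exactly this last weak-continuity claim: one cannot pass to the limit directly in the product $\rho^n\,(K\star\eta^n)$ of two sequences that converge only weakly, so some compactness input is essential, and the Fr\'echet--Kolmogorov argument (equivalently, approximating $K$ by mollified kernels and using the uniform $L^1$-bound on $\eta^n$ coming from the mass constraint) is what provides it; once that is in place, everything else is standard. The only further thing to pin down carefully is the precise meaning of $K\star\eta$ on the bounded domain $\Omega$, which the zero-extension convention from the first paragraph handles.
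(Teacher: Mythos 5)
Your proposal is correct and follows essentially the same route as the paper: the direct method with the identical coercivity estimates (Young's convolution inequality plus the weighted Young/Cauchy--Schwarz bound on $\int_\Omega \rho\, K\star\eta\,\d x$, split according to the sign of $\delta$), weak $L^2$-compactness of the minimising sequence, and lower semicontinuity of the quadratic part combined with continuity of the nonlocal part. The one place you go beyond the paper is welcome rather than divergent: the paper merely asserts that ``the nonlocal part of the functional is even continuous'' under the hypotheses on $K$, whereas you substantiate this with the Fr\'echet--Kolmogorov argument giving strong $L^2$-convergence of $K\star\eta^n$ paired with weak convergence of $\rho^n$; you also correctly avoid the paper's unnecessary forward appeal to the segregation lemma when setting up the minimising sequence for $\delta\le 0$.
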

    \begin{proof}
    Our proof is based on the direct method of calculus of variations and we treat the cases $\delta \in (-1,0]$ and $\delta \in (0,\infty)$ seperately.\smallskip\\
    \textbf{Case $\delta \in (-1,0]$:} In this case, $\mathcal F_{NL}$ is bounded from below by zero. Thus we can consider a minimising sequence, $(\rho_n, \eta_n)$ which we can choose to be segregated without restriction as they can otherwise be no minimisers by Lemma \ref{lem:segregated_minimisers_delta_neg}. There holds
    	\begin{align*}
    		\NLen(\rho_n, \eta_n) \leq \NLen(\rho_0, \eta_0),
    	\end{align*}
    	for any $n\in \N$. Hence, upon rearranging some terms, we obtain
		\begin{align}
		\label{eq:l2estpre}
			\frac{1+\delta}{2}\int_\Omega \rho_n^2 + \eta_n^2 \, \d x \leq \NLen(\rho_0, \eta_0) + \delta \int_\Omega \rho_n K\star\eta_n\, \d x.
		\end{align}
		Using Young's inequality for convolutions and the weighted Young inequality for products, we have 
		\begin{align}\label{eq:estimate_nonlocal_gamma}
		\begin{split}
		 \delta \int_\Omega \rho_n K\star\eta_n\, \d x &\le |\delta|\|\rho_n\|_{L^2(\Omega)} \|K\star \eta_n\|_{L^2(\Omega)} \le |\delta|\|\rho_n\|_{L^2(\Omega)} \|K\|_{L^1(\Omega)}\|\eta_n\|_{L^2(\Omega)} \\
		 &\le \frac{\delta^2}{2\gamma}\|\rho_n\|_{L^1(\Omega)}^2 + \frac{\gamma}{2} \|\eta_n\|_{L^2(\Omega)}^2.
		 \end{split}
		\end{align}
		As $\delta > -1$ is fixed, there exists $\alpha > 0$ such that $(1+\delta)/2 \ge \alpha$. Choosing $\gamma < \alpha$, the second term can be absorbed by the left hand side in \eqref{eq:l2estpre} while the first term is bounded which yields a uniform $L^2$ estimate on the minimising sequence.\\
    \textbf{Case $\delta \in (0,\infty)$:} To obtain a lower bound we observe that, for all $(\rho,\eta) \in \mathcal X$ there holds
		\begin{align*}
		 	\frac{1+\delta}{2}&\int_\Omega (\rho+\eta)^2\d x - \delta \int_\Omega \rho K \star \eta\d x\\
			 &\ge \frac{1+\delta}{2}\int_\Omega (\rho+\eta)^2\d x-\frac{\delta}{2}\left(\|\rho\|^2_{L^2(\Omega)} + \|\eta\|^2_{L^2(\Omega)}\right)\\
			 &=\frac{1}{2}\int_\Omega (\rho+\eta)^2\d x + \frac{\delta}{2}\int_\Omega \rho\eta \d x \geq 0,
		\end{align*}
        where we used that $K$ is normalised in $L^1$ and that both $\rho$ and $\eta$ are non-negative. In order to obtain a uniform $L^2$ estimate, we observe that
        \begin{align*}
         &\frac{1+\delta}{2}\int_\Omega \rho_n^2+\eta_n^2\d x\le \frac{1+\delta}{2}\int_\Omega (\rho_n+\eta_n)^2\d x \le \NLen(\rho_0,\eta_0) +  \delta \int_\Omega \rho_n K \star \eta_n\d x\\
         &\leq \NLen(\rho_0,\eta_0) +  \delta \|\rho_n K \star \eta_n\|_{L^1(\Omega)}.
        \end{align*}
        Estimating the last term as in \eqref{eq:estimate_nonlocal_gamma} and arguing as above, we obtain the desired bounds on $(\rho_n,\eta_n)$.
		Thus we obtain, in both cases, a lower bound on the functional and uniform $L^2$--bounds on the minimising sequence which yields the existence of two functions $\rho, \eta \in L^2(\Omega)$ such that the sequence converges weakly to the pair $(\rho, \eta)$. The non-negativity of the limits is a consequence of the weak convergence, as is the conservation of mass, as the sequence converges weakly in $L^1(\Omega)$ as well. By the lower-semicontinuity of the functional $\NLen$, we obtain
		\begin{align*}
			\NLen(\rho, \eta) \leq \liminf_{n\rightarrow \infty}\NLen(\rho_n, \eta_n),
		\end{align*}
		which concludes the proof. Notice that the nonlocal part of the functional is even continuous with the hypothesys on the kernel $K$.
    \end{proof}
For $\delta < 0$, we have the following segregation property.
\begin{lemma}[\label{lem:segregated_minimisers_delta_neg}Minimisers are strictly segregated] Let $-1< \delta < 0$, and $(\rho,\eta)$ a minimiser of \eqref{eq:nonlocenergy}. Then both densities are segregated up to a set of measure zero, i.e., there holds
	\begin{align*}
		\supp(\rho) \cap \supp(\eta) = \emptyset,
	\end{align*}
    up to a set of measure zero.
    \end{lemma}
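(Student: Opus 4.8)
The plan is to argue by contradiction and transplant the mass‑swap argument from part~(3) of Theorem~\ref{thm:exminlocal} to the nonlocal energy. Suppose $(\rho,\eta)\in\calX$ minimises $\NLen$ but $\supp(\rho)\cap\supp(\eta)$ has positive Lebesgue measure. Exactly as in that proof, pick disjoint sets $B_1,B_2$ of equal measure inside this overlap together with a threshold $\epsilon_0>0$ such that $\rho,\eta>\epsilon_0$ on $B_1\cup B_2$, and for $0<\epsilon<\epsilon_0$ set
\begin{align*}
  \rho^\epsilon=\rho-\epsilon\matrixid_{B_1}+\epsilon\matrixid_{B_2},\qand
  \eta^\epsilon=\eta+\epsilon\matrixid_{B_1}-\epsilon\matrixid_{B_2}.
\end{align*}
This competitor is feasible: $\rho^\epsilon,\eta^\epsilon\ge0$, the masses are unchanged, and crucially $\rho^\epsilon+\eta^\epsilon=\rho+\eta$, so the local (quadratic‑in‑$\rho+\eta$) part of $\NLen$ is left untouched and only the bilinear nonlocal term moves.

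Writing $\nu:=\matrixid_{B_2}-\matrixid_{B_1}$, so that $\rho^\epsilon=\rho+\epsilon\nu$ and $\eta^\epsilon=\eta-\epsilon\nu$, a direct expansion of the nonlocal term gives
\begin{align*}
  \NLen(\rho^\epsilon,\eta^\epsilon)=\NLen(\rho,\eta)-\delta\epsilon\,C_1+\delta\epsilon^2\,C_2,
\end{align*}
where $C_1:=\int_\Omega\nu\,\bigl(K\star(\eta-\rho)\bigr)\d x$ and $C_2:=\iint_{\Omega\times\Omega}K(x-y)\,\nu(x)\nu(y)\d x\,\d y$ (the radial symmetry of $K$ is used to bring $C_1$ into this form). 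The key point is that it suffices to guarantee $C_2>0$: since $\delta<0$ the order‑$\epsilon^2$ term is then strictly negative; if moreover $C_1\neq0$, then after relabelling $B_1\leftrightarrow B_2$ if necessary — which flips the sign of $\nu$, hence of $C_1$, while leaving $C_2$ unchanged — we have $-\delta\epsilon\,C_1\le0$; and if $C_1=0$ the quadratic term alone does the job. In every case $\NLen(\rho^\epsilon,\eta^\epsilon)<\NLen(\rho,\eta)$ for $\epsilon$ small, contradicting minimality, so $\supp(\rho)\cap\supp(\eta)$ is null. (That minimisers leave no vacuum then follows, exactly as for $\Len$, by shifting mass into empty regions.)

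The real obstacle — and the only place where this departs from the Dirac‑kernel computation, in which $C_2=\|\nu\|_{L^2}^2=|B_1|+|B_2|>0$ for free — is securing $C_2=\iint K(x-y)\nu(x)\nu(y)\d x\,\d y>0$ for an admissible $\nu=\matrixid_{B_2}-\matrixid_{B_1}\not\equiv0$. I expect the cleanest route to be positive‑definiteness of $K$: if $\widehat K\ge0$ — which holds for the smooth, radially decreasing Dirac‑approximations one has in mind, e.g.\ Gaussian mollifiers — then $C_2=\int\widehat K(\xi)\,|\widehat\nu(\xi)|^2\,\d\xi\ge0$, and strictly so as soon as $\widehat K>0$, since $\widehat\nu\not\equiv0$. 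Alternatively, if $K$ is concentrated at a small scale (say compactly supported there) and the overlap set is not too small, one can instead choose $B_1,B_2$ at mutual distance exceeding $\operatorname{diam}(\supp K)$, so that $\iint_{B_1\times B_2}K=0$ while $\iint_{B_i\times B_i}K>0$, again forcing $C_2>0$. Pinning down the minimal assumption on $K$ under which this step is valid is, to my mind, the crux of the proof.
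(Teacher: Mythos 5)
Your perturbation is exactly the paper's (mass swap along $\nu=\matrixid_{B_2}-\matrixid_{B_1}$, preserving $\rho+\eta$ so that only the bilinear term moves), and your expansion
\begin{align*}
	\NLen(\rho^\epsilon,\eta^\epsilon)=\NLen(\rho,\eta)-\delta\epsilon\int_\Omega\nu\,K\star(\eta-\rho)\,\d x+\delta\epsilon^2\iint_{\Omega\times\Omega} K(x-y)\,\nu(x)\nu(y)\,\d x\,\d y
\end{align*}
is correct (indeed more complete than the paper's, which only tracks the bilinear term on $(B_1\times B_1)\cup(B_2\times B_2)$). But as written the argument does not prove the lemma under the stated hypotheses, and the gap is exactly where you flag it: you make the conclusion rest on $C_2=\iint K\nu\nu>0$ in \emph{every} case, and neither of your remedies is available in general. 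Positive definiteness fails for the very kernel the paper uses in Section 4: $\bar K=\tfrac{1}{2\alpha}\indicator_{[-\alpha,\alpha]}$ has Fourier transform proportional to $\sin(\alpha\xi)/\xi$, which changes sign. And the separation trick $\operatorname{dist}(B_1,B_2)>\operatorname{diam}(\supp K)$ is impossible when the overlap region is smaller than the range of $K$; in that situation all four block integrals of $\bar K$ coincide and one computes $C_2=0$ exactly, so when $C_1=0$ your competitor leaves the energy unchanged to second order and no contradiction follows. So your proof establishes the lemma only for strictly positive-definite kernels, which is strictly weaker than the statement.

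Two remarks to sharpen this. First, your case analysis gives away more than necessary: if $C_1\neq0$, then after relabelling you get $-\delta\epsilon C_1<0$ \emph{strictly}, and for $\epsilon$ small this first-order decrease dominates $\delta\epsilon^2C_2$ whatever the sign of $C_2$ — so $C_2>0$ is only needed in the single degenerate case $C_1=0$, i.e.\ when $\int\nu\,K\star(\eta-\rho)\,\d x=0$ for the chosen (and, if you vary $B_1,B_2$, for all admissible) pairs, which forces $K\star(\eta-\rho)$ to be constant on the overlap. The genuine gap is confined to that case. Second, this is also precisely where the paper's own proof is cavalier: it drives the contradiction through the first-order term by choosing $B_1$, $B_2$ according to the sign of $\rho-\eta$, falling back on the second-order term only when $\rho=\eta$ a.e.\ on $B_1\cup B_2$ — where it needs only $\iint_{B_1\times B_1}K+\iint_{B_2\times B_2}K>0$ (positivity of $K$ near the origin), not positive definiteness. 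However, because the paper drops the off-diagonal contributions, its first-order coefficient is effectively $\int\nu(\eta-\rho)$ rather than the correct $\int\nu\,K\star(\eta-\rho)$, so making either version airtight requires an extra step relating the sign of $K\star(\eta-\rho)$ to that of $\eta-\rho$ on small sets (e.g.\ a Lebesgue-point argument for Dirac-approximating kernels). Your write-up correctly isolates the real difficulty, but it does not resolve it.
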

    \begin{proof}
		The proof is by contradiction, so let us assume that both densities are supported on a set of positive Lebesgue measure, $\mathring D \subset \supp(\rho)\cap \supp(\eta)$. Without restriction (possibly by restricting $\mathring D$ further) we may assume that  $\rho\big|_{\mathring D},\, \eta\big|_{\mathring D}>\alpha$, for some $\alpha>0$.
		
		Next we choose two non-intersecting sets $B_1, B_2 \subset \mathring D$ with equal Lebesgue measure, \emph{i.e.}, $|B_1|=|B_2|$, and define the perturbed minimisers by
		\begin{align*}
			\rho^\epsilon &:= \rho - \epsilon \matrixid_{B_1} + \epsilon\matrixid_{B_2},\\
			\eta^\epsilon &:= \eta + \epsilon \matrixid_{B_1} - \epsilon\matrixid_{B_2}.
		\end{align*}
		Note that the mass is invariant under this perturbation and non-negativity of the competitors is guaranteed as long as $\epsilon < \alpha$. For ease of notation we set $B^\star := B_1 \times B_1$, and $B_\star = B_2 \times B_2$, and also $B:= B^\star \cup B_\star$.
		Then the perturbed energy reads
		\begin{align*}
			\NLen(\rho^\epsilon, \eta^\epsilon) &= \frac{1+\delta}{2} \int_\Omega \left(\sigma^\epsilon \right)^2 \d x- \delta \int_\Omega \eta^\epsilon K\star \rho^\epsilon\d x\\
			&= \frac{1+\delta}{2}\int_{\Omega}\sigma^2 \d x - \delta \iint_{\Omega^2\setminus B}K(x-y)\rho(y) \eta(x) \d x \d y - \delta \iint_{B}K(x-y)\rho^\epsilon(y) \eta^\epsilon(x)\d x \d y,
		\end{align*}
		having used the fact that $\rho^\epsilon + \eta^\epsilon  = \rho + \eta$.
		Finally, we address the cross-interaction term.
		\begin{align*}
			\iint_{B}  &K(x-y) \rho^\epsilon(y)\eta^\epsilon(x) \d x \d y \\
			&= \iint_{B^\star}  K(x-y) \rho^\epsilon(y)\eta^\epsilon(x) \d x \d y + \iint_{B_\star} K(x-y) \rho^\epsilon(y)\eta^\epsilon(x)\d x \d y \\
			&=\iint_{B^\star} K(x-y) (\rho(y)-\epsilon)(\eta(x)+\epsilon)\d x \d y + \iint_{B_\star}  K(x-y) (\rho(y)+\epsilon)(\eta(x) - \epsilon)\d x \d y.
		\end{align*}
		The first integral becomes
		\begin{align*}
			\iint_{B^\star} &K(x-y)(\rho(y)-\epsilon)(\eta(x) + \epsilon)\d x \d y \\ 
			&=  \iint_{B^\star} K(x-y)\rho(y)\eta(x) \d x\d y+ \epsilon \iint_{B^\star} K(x-y) (\rho(y)-\eta(x))\d x \d y  + \epsilon^2\iint_{B^\star}K(x-y)\d x \d y,
		\end{align*}
		and, similarly, the second term becomes
		\begin{align*}
			\iint_{B_\star}  &K(x-y) (\rho(y)+\epsilon)(\eta(x) - \epsilon)\d x \d y\\
			&= \iint_{B_\star} K(x-y)\rho(y)\eta(x)\d x \d y + \epsilon\iint_{B_\star}K(x-y) (\eta(x)-\rho(y))\d x \d y + \epsilon^2\iint_{B_\star} K(x-y) \d x \d y.
		\end{align*}
		Upon combining all these computations we get
		\begin{align*}
			\NLen(\rho^\epsilon, \eta^\epsilon) &= \NLen(\rho, \eta)\\
			&\quad + \epsilon\delta \bigg\{\iint_{B^\star} K(x-y) (\rho(y)-\eta(x)) \d x \d y + \iint_{B_\star} K(x-y) (\eta(x)-\rho(y))\d x\d y\bigg\}\\
			&\quad+ \epsilon^2 \delta \iint_B K(x-y)\d x \d y.
		\end{align*}
		If $\rho=\eta$ a.e. on $B_1\cup B_2$ then we already get a contradiction, for the order $\epsilon$-terms vanish and the last term is clearly negative.
		Thus, let us assume there exists some  $\alpha'>0$ and a subset $P\,\dot\cup\, N \subset B_1 \cup B_2$. such that
			$\rho - \eta > \alpha'$, 
		almost everywhere on $P$, and, 
			$\rho - \eta < \alpha'$,
		almost everywhere on $N$, respectively.
		 We choose two more sets of equal Lebesgue measure, again labelled $B_1\subset P$ and $B_2 \subset N$. Upon performing the same computation as above we obtain
		 \begin{align}
		 	\NLen(\rho^\epsilon, \eta^\epsilon) \leq  \NLen(\rho, \eta) + \epsilon \delta \alpha' \iint_B K(x-y) \d x \d y  + \epsilon^2 \delta \iint_B K(x-y) \d x \d y < \NLen(\rho,\eta),
		 \end{align}
		 which is a contradiction as $(\rho, \eta)$ was assumed to be a minimiser.
    \end{proof}
    Next we show that in the case of positive $\delta$ energy minimisers are fully mixed.
        \begin{lemma}[Minimisers are fully mixed for $\delta >0$] Let $\delta > 0$, $K\in L_+^1(\R)$,  and $(\rho,\eta)$ a minimiser of the energy \eqref{eq:nonlocenergy}. Then both densities are fully mixed, i.e.
     \begin{align}
     	\supp(\rho) \cap \supp(\eta) = \Omega,
     \end{align}
     up to a set of measure zero.
    \end{lemma}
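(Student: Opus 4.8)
The plan is to mirror the structure of the segregation proof (Lemma \ref{lem:segregated_minimisers_delta_neg}) but with the opposite sign of $\delta$, so that mixing is now energetically favoured. I argue by contradiction: suppose $(\rho,\eta)$ is a minimiser for which the vacuum-free-overlap region is genuinely large, i.e. $|\supp(\rho)^c \cap \supp(\eta)^c|$ together with the geometry of the supports forces a nonempty set on which one species is present and the other vanishes while there is room to redistribute. More precisely, if the two supports do not cover $\Omega$ up to null sets, I first rule out vacuum exactly as in the proof of Theorem \ref{thm:exminlocal}(2): shifting a snippet of mass of size $\epsilon$ from a populated set $B_1 \subset \supp(\rho)$ into a vacuum set $B_3$ changes $\tfrac{1+\delta}{2}\int\sigma^2$ at order $\epsilon$ with a strictly negative coefficient (since $\sigma > 0$ on $B_1$) and changes the nonlocal term only at orders $\epsilon$ and $\epsilon^2$ with bounded coefficients, so for small $\epsilon$ the energy strictly decreases — a contradiction. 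Hence $\supp(\rho)\cup\supp(\eta) = \Omega$ up to null sets, and it remains to show the overlap is everything.

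For the overlap, suppose for contradiction that $|\supp(\rho)^c| > 0$, i.e. there is a set $\mathring D$ of positive measure on which $\rho = 0$ while (by the previous step) $\eta > \alpha > 0$. Since $\rho$ has positive mass, there is a set $\mathring E \subset \supp(\rho)$ with $\rho > \alpha$ there. Pick $B_1 \subset \mathring E$ and $B_2 \subset \mathring D$ of equal Lebesgue measure and define the mass-preserving, sign-symmetric perturbation
\begin{align*}
	\rho^\epsilon = \rho - \epsilon\matrixid_{B_1} + \epsilon\matrixid_{B_2}, \qquad \eta^\epsilon = \eta + \epsilon\matrixid_{B_1} - \epsilon\matrixid_{B_2},
\end{align*}
which keeps $\sigma$ invariant and stays nonnegative for $\epsilon < \alpha$. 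Then, exactly as in the computation in Lemma \ref{lem:segregated_minimisers_delta_neg}, $\NLen(\rho^\epsilon,\eta^\epsilon) = \NLen(\rho,\eta) + \epsilon\delta\,(\text{cross term in }\rho - \eta) + \epsilon^2\delta \iint_B K(x-y)\,\d x\,\d y$. Because $\rho - \eta = -\eta < -\alpha < 0$ throughout $B_2$ and $\rho - \eta$ is bounded on $B_1$, one can, by first shrinking $B_1$ and $B_2$ so that $\rho - \eta$ has a definite sign on $B_1 \cup B_2$ and then (if needed) swapping $\epsilon \leftrightarrow -\epsilon$, arrange the order-$\epsilon$ term to be $\le 0$; since $\delta > 0$ the order-$\epsilon^2$ term is strictly positive — so this particular sign choice must be revisited. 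The correct move is to orient the swap so the order-$\epsilon$ coefficient is \emph{strictly negative} (possible precisely because $\rho - \eta$ is not a.e. zero on $B_1 \cup B_2$, being strictly negative on $B_2$), which dominates the positive $O(\epsilon^2)$ term for small $\epsilon$ and yields $\NLen(\rho^\epsilon,\eta^\epsilon) < \NLen(\rho,\eta)$, contradicting minimality. The symmetric argument with the roles of $\rho$ and $\eta$ exchanged gives $|\supp(\eta)^c| = 0$, so $\supp(\rho)\cap\supp(\eta) = \Omega$ up to null sets.

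The main obstacle — and the point that needs the most care — is the sign bookkeeping in the order-$\epsilon$ term: with $\delta > 0$ the quadratic remainder now \emph{hurts} us rather than helps, so unlike in the $\delta < 0$ lemma we cannot fall back on a vanishing linear term; we must actively exhibit a redistribution direction along which the linear term is strictly negative. This is why it is essential to put $B_2$ inside the region where $\rho$ vanishes but $\eta$ does not (so $\rho - \eta$ has a guaranteed strict sign there) rather than attempting the argument on a generic overlap set, and why the preliminary no-vacuum step is needed to guarantee $\eta > 0$ on $\mathring D$. A secondary technicality is ensuring the nonlocal double integrals are genuinely $O(\epsilon)$ and $O(\epsilon^2)$ with constants independent of the (shrinking) choice of $B_1, B_2$, which follows from $K \in L^1$ and $\|K\|_{L^1} = 1$ together with Young's inequality, exactly as in \eqref{eq:estimate_nonlocal_gamma}.
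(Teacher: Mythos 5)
Your overall strategy matches the paper's: argue by contradiction, perform the $\sigma$-preserving mass swap $\rho^\epsilon = \rho - \epsilon\matrixid_{B_1}+\epsilon\matrixid_{B_2}$, $\eta^\epsilon = \eta+\epsilon\matrixid_{B_1}-\epsilon\matrixid_{B_2}$, and read off the sign of the first-order term, exactly as in Lemma \ref{lem:segregated_minimisers_delta_neg}. Your preliminary no-vacuum step is a sensible addition (the paper's proof only treats single-occupancy regions, while the stated conclusion $\supp(\rho)\cap\supp(\eta)=\Omega$ also requires excluding vacuum). The structural difference is the placement of the test sets: the paper anchors $B_1$ inside the single-occupancy region $\{\rho>0,\ \eta=0\}$ and chooses $B_2$ where $\eta-\rho>0$, so that both diagonal contributions $\iint_{B_1\times B_1}K(x-y)\rho(y)\,\d x\,\d y$ and $\iint_{B_2\times B_2}K(x-y)(\eta(x)-\rho(y))\,\d x\,\d y$ have a guaranteed sign; you put $B_2$ in $\{\rho=0,\ \eta>0\}$ and let $B_1$ be a generic subset of $\supp(\rho)$.

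The genuine gap is the pivot of your sign argument. Since $\rho\equiv 0$ on your $B_2$, only $\epsilon>0$ keeps $\rho^\epsilon$ nonnegative, and relabelling $B_1\leftrightarrow B_2$ fails for the same reason; so, unlike in the segregation lemma where both densities are bounded below by $\alpha$ on $B_1\cup B_2$ and both signs of $\epsilon$ are admissible, here there is exactly one admissible direction and you cannot ``orient the swap'' a posteriori. You must therefore prove that the first-order coefficient is strictly negative for that single direction, and your justification --- $\rho-\eta<-\alpha$ on $B_2$ plus ``a definite sign'' on $B_1$ --- does not do this: if the definite sign on $B_1$ is negative (i.e.\ $\eta>\rho$ there, which you cannot exclude since $B_1$ only satisfies $\rho>\alpha$), the $B_1$-contribution opposes the $B_2$-contribution and the total first-order coefficient can have either sign, in which case no contradiction is reached. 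The repair is the paper's device, mirrored: after excluding vacuum, place $B_1$ inside a set where $\rho>\alpha$ \emph{and} $\eta=0$, so that $\rho(y)-\eta(x)=\rho(y)>0$ on $B_1\times B_1$ and both diagonal terms reinforce each other; a careful expansion (the first-order term carries the prefactor $-\delta\epsilon$ times a positive bracket) then shows that the admissible direction $\epsilon>0$ already decreases the energy, with no sign swap needed. A smaller error of the same kind sits in your no-vacuum step: the nonlocal term contributes at order $\epsilon$ with an uncontrolled sign, and being of the same order as the favourable local term $-(1+\delta)\epsilon\int_{B_1}\sigma\,\d x$ it cannot be discarded ``for small $\epsilon$''; it must be bounded against the local term or given a sign by the choice of $B_1$ and $B_3$.
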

    \begin{proof}
    	Again assume that a minimiser of the energy is not fully mixed, i.e. assume that there exists an open set $\mathring D \subset \supp(\rho)$ s.t. $\eta\big|_{\mathring D} \equiv 0$.
        Now let $B_1 \subset \mathring D$ and $B_2$ such that $\eta-\rho > 0$ on it. A computation similar to that  above yields
        \begin{align*}
        \NLen(\rho^\epsilon, \eta^\epsilon) &= \NLen(\rho, \eta)\\
        &\quad + \epsilon \delta \bigg\{\iint_{B^\star} K(x-y)\rho(y) \d x \d y+ \iint_{B_\star} K(x-y)(\eta(x)-\rho(y))\d x \d y\bigg\} + \curlyO(\epsilon^2).
        \end{align*}
        The term of order $\epsilon$ is positive, whence
        \begin{align*}
        	\NLen(\rho^\epsilon, \eta^\epsilon) < \NLen(\rho, \eta),
        \end{align*}
        for small $-\epsilon > 0$. This is a contradiction and concludes the proof.
    \end{proof}

\section{Analysis of the Cross-Diffusion System}\label{sec:pde}

In this section we shall list and discuss properties of the evolution equations 
\begin{align}
\label{eq:PDEsystem_local}
\left\{
\begin{array}{rll}
	\dfrac{\partial \rho}{\partial t} &= \fpartial x \left(\rho \fpartial x \left((1+\delta)\rho  + \eta\right)\right),& \text{in } \Omega,\\[1.2em]
	\dfrac{\partial \eta}{\partial t} &= \fpartial x \left(\eta \fpartial x \left(\rho  + (1+\delta)\eta\right)\right), & \text{in } \Omega,\\[1.2em]
	&\rho \fpartial x \left((1+\delta)\rho + \eta\right) =0,& \text{at } x=\pm L, \\[0.6em]
	&\eta \fpartial x \left(\rho + (1+\delta) \eta\right) =0 & \text{at } x = \pm L,
\end{array}
\right.
\end{align}
depending on $\delta$. System \eqref{eq:PDEsystem_local}  is equipped with initial data $\rho(0,x) = \rho_0(x)$ and $\eta(0,x) = \eta_0(x)$ for two non-negative, square-integrable functions $\rho_0, \eta_0 \in L_+^2(-L,L)$. As already set out in the introduction the behaviour of system \eqref{eq:PDEsystem_local} is largely dependent on the choice of $\delta$. 
\subsection{The case $\delta > 0$}
In the case of $\delta > 0$ it is easily verified that the diffusion matrix is coercive and the existence theory of \cite{Jun15} or \cite{DFEF17}, for instance, can be applied. In the latter case, we note that the energy density satisfies the coercivity assumptions (D1)-(D3) with $m_1 = m_2 = 2$ so that $\alpha_1,\alpha_2,a,b$ can be chosen suitably. Then $\rho,\eta \in L^6(0,T; \R)$ and $\nabla \rho,\nabla \eta \in L^2((0,T)\times \R)$ such that
\begin{align*}
	\left\{
	\begin{array}{rl}
	\ds \ddt \int_\R \phi(x) \rho(x)\d x &\ds = -\int_\R \rho \nabla \frac{\delta \Len}{\delta \rho}  \cdot \nabla \phi \d x,\\[1.2em]
	\ds\ddt \int_\R \phi(x) \rho(x)\d x &\ds = -\int_\R \eta \nabla \frac{\delta \Len}{\delta \eta}  \cdot \nabla \phi \d x.
	\end{array}
	\right.
\end{align*}

The case of equal dispersal rates, $\delta = 0$, has received a lot of attention over the years. The system was first proposed in \cite{GP84} as a model for two distinct interacting species moving in such a way that they avoid overcrowding. In  \cite{BGHP85, BGH87, BGH87a} the problem was studied as a free boundary problem for ordered initial data, i.e., $\supp(\rho) < \supp(\eta)$ or vice versa, in order to deal with possible gaps between both supports.  Only a couple of decades later, reaction terms were added to the system and an existence theory was developed (see \cite{BHIM12}) more general initial data were allowed (see \cite{Carrillo2017}) and the restriction to one spatial dimension was removed (see \cite{Gwiazda2018}). It is important to highlight that the strategies employed in these works differ strongly from each other.

\subsection{Existence and Segregation for $\delta < 0$}

Since the segregation result of this section relies on the theory of optimal transportation,   we shall briefly recapitulate the most important notions for the sake of a rigorous presentation.   Then the idea is to think of $\rho$ and $\eta$ as elements of the set of probability measures, $\curlyP(\Omega)$. For the purpose of introducing the relevant notations we shall switch the notation to general measures $\mu, \nu$ rather than $\rho, \eta$ lest the reader be confused.

Let us begin by introducing the notion of push-forward measures. To this end let $\mu \in \curlyP(\Omega)$ and $\curlyT:\Omega \rightarrow \Omega$ be a Borel measurable function. We denote by $\curlyT_\#\mu \in \curlyP(\Omega)$ the push-forward of $\mu$ by the transport map $\curlyT$, defined by
\begin{align*}
\int_\Omega f \d \curlyT_\# \mu =\int_\Omega f\circ \curlyT \d \mu,
\end{align*}
for any Borel measurable functions $f:\Omega \rightarrow\R$. Next, let us note that for any two probability measure $\mu, \nu \in \curlyP(\Omega)$
\begin{align*}
	d_2(\mu, \nu) := \left(\inf_{\gamma \in \Gamma(\mu, \nu)} \iint_{\Omega \times \Omega} |x-y|^2 \d \gamma \right)^{1/2},
\end{align*}
defines a metric on $\curlyP(\Omega)$ which is usually referred to as 2-Wasserstein distance. Here $\Gamma(\mu, \nu)$ stands for the set of transport plans with marginals $\mu$ and $\nu$, that is 
\begin{align*}
	\Gamma(\mu, \nu) := \left\{ \gamma \in \curlyP(\Omega\times\Omega)\;|\; \pi_\#^1\gamma = \mu, \text{ and } \pi_\#^2\gamma = \nu\right\},
\end{align*}
where $\pi^i:\Omega\times\Omega \rightarrow \Omega$ denotes the projection onto the $i$th component of the product space. Moreover, we call $\Gamma_o(\mu, \nu)$ the set of optimal transport plans and note that
\begin{align*}
	d_2^2(\mu, \nu) = \iint_{\Omega \times \Omega} |x-y|^2 \d \gamma,
\end{align*}
whenever $\gamma\in\Gamma_o(\mu, \nu)$, a fact we shall exploit later a lot.    Endowed with the 2-Wasserstein metric, the space $(\curlyP(\Omega), d_2)$ is a complete metric space. Finally, the link between transport plans and transport maps can be established as follows. Whenever $\mu \ll \curlyL$, i.e. $\mu$ is absolutely continuous with respect to the Lebesgue measure, there exists a unique transport plan $\gamma$ that can be written as $\gamma= (\id, \curlyT)$ where $\curlyT$ pushes $\mu$ onto $\nu$, $\curlyT_\#\mu = \nu$.

With these notions at hand, let us recall the Jordan-Kinderlehrer-Otto (JKO) scheme; cf. \cite{JKO98}.  For a given initial datum $\mu^0\in \curlyP(\Omega)$ one recursively defines a sequence

\begin{align}
	\label{eq:JKOproblem}
	\mu_\tau^{n+1} \in \operatorname{argmin}_{\mu \in \curlyP(\Omega)} \left\{\frac{1}{2\tau} d_2^2(\mu,\mu^n) + \Len(\mu)\right\},
\end{align}
with $\Len$ as defined in \eqref{eq:local_energy}.

\begin{remark}\label{rem:noGradFlow}
	In Proposition \ref{prop:nonlsc} we have seen that the local energy is not lower semicontinuous with respect to the weak convergence. Thus the theory of \cite{AGS08} can not be applied, and optimisers  to the minimising movement scheme \eqref{eq:JKOproblem} need to be constructed directly. In particular, we stress that $\Len$ does not give rise to a 2-Wasserstein gradient flow due to the lack of lower semicontinuity in the energy.
\end{remark}

We shall see that, in our case, problem  \eqref{eq:JKOproblem} is however well-defined and admits a sequence of minimisers. The convergence result below is a consequence of the theory developed in \cite{CFSS17}.
\begin{theorem}[Convergence to weak solutions] Let $\rho_0,\eta_0\in BV(\Omega)$ be segregated and such that there exists a BV-function $0\leq r_0 \leq 1$ such that $r_0 = \rho_0/(\rho_0+\eta_0)$ on $\{\rho_0+\eta_0>0\}$. Then, problem \eqref{eq:JKOproblem} is well-posed and admits a  sequence $(U_\tau^n)_{n\in\N}$ for any $\tau\in(0,1)$. Moreover, the piecewise constant interpolations $(\rho_\tau)_{\tau>0}$ and $(\eta_\tau)_{\tau>0}$  associated to the sequences converge to a weak solution of
\begin{align}
	\label{eq:Bertsch}
	\left\{
	\begin{array}{rl}
	\begin{split}
	\!\!\!\! \partialt \rho &= (1+\delta)\px \left(\rho \px \left(\rho + \eta \right)\right),\\[1em]
	\!\!\!\! \partialt \eta &= (1+\delta)\px \left(\eta \px \left(\rho + \eta\right)\right),
	\end{split}
	\end{array}
	\right.
\end{align}
in the sense of \cite[Def. 2.6]{CFSS17}, and the solution $(\rho, \eta)$ remains segregated for all time.
\end{theorem}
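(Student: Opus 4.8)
The plan is to deduce the statement from the theory of \cite{CFSS17}, which analyses exactly this minimising-movement scheme (there in the presence of reaction terms, of which the present case is the instance of vanishing reaction); I shall recall the main steps and indicate where the peculiarities of our energy enter. The structural starting point is that on segregated pairs the cross-diffusion system \eqref{eq:PDEsystem_local} collapses to \eqref{eq:Bertsch}: on $\supp\rho$ one has $\eta=0$, hence $(1+\delta)\rho+\eta=(1+\delta)\sigma$, and symmetrically on $\supp\eta$, so that $\partial_t\rho=(1+\delta)\partial_x(\rho\,\partial_x\sigma)$ and $\partial_t\eta=(1+\delta)\partial_x(\eta\,\partial_x\sigma)$. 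Adding the two equations, $\sigma=\rho+\eta$ solves the porous-medium-type equation $\partial_t\sigma=(1+\delta)\partial_x(\sigma\,\partial_x\sigma)$, while the fraction $r=\rho/\sigma$ is advected by the velocity field $v=-(1+\delta)\partial_x\sigma$ shared by $\rho,\eta,\sigma$; this is the reason segregation is propagated by the flow.

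First I would establish well-posedness of \eqref{eq:JKOproblem}. Given a segregated iterate $U_\tau^n=(\rho_\tau^n,\eta_\tau^n)$ I restrict the minimisation to segregated competitors, on which $\int_\Omega\rho\eta\,\d x=0$ so that $\Len(\rho,\eta)=\tfrac{1+\delta}{2}\int_\Omega\sigma^2\,\d x$ is a function of $\sigma$ alone; the step is then governed by the scalar porous-medium energy for $\sigma$, with $\rho$ and $\eta$ transported by a common optimal map, and existence for this restricted minimisation together with a BV bound on the minimiser is exactly what \cite{CFSS17} provides. I expect this to be the main obstacle: because $\Len$ is \emph{not} weakly lower semicontinuous --- the construction of Proposition \ref{prop:nonlsc} shows precisely that fine mixing lowers the energy in the limit --- a minimising sequence of segregated pairs for the unrestricted problem need not have a segregated weak limit, so the direct method on the restricted class is not automatic. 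This is remedied by the uniform BV estimates of \cite{CFSS17}: segregated configurations with a controlled number of interfaces form an $L^1$-compact set whose limits are again $\{0,1\}$-valued fractions, so along BV-bounded minimising sequences segregation survives; one then checks as in the proof of Theorem \ref{thm:exminlocal}(3) that the resulting segregated minimiser is a minimiser of the full problem, so \eqref{eq:JKOproblem} is well-posed and admits a segregated sequence $(U_\tau^n)_{n\in\N}$.

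Next I would run the classical minimising-movement limit passage. The scheme gives the discrete energy--dissipation inequality $\Len(U_\tau^{n+1})+\tfrac{1}{2\tau}d_2^2(U_\tau^{n+1},U_\tau^n)\le\Len(U_\tau^n)$, whence $\sum_n d_2^2(U_\tau^{n+1},U_\tau^n)\le 2\tau\,(\Len(U_\tau^0)-\inf\Len)$, uniform $L^2$ bounds, and --- via the BV estimates of \cite{CFSS17} --- enough compactness (an Aubin--Lions-type argument) to extract strong $L^1$, indeed $L^2_{\mathrm{loc}}$, limits $\rho_\tau\to\rho$, $\eta_\tau\to\eta$ of the piecewise-constant interpolations, together with a $1/2$-H\"older-in-time $d_2$-estimate. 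Passing to the limit in the discrete Euler--Lagrange conditions for \eqref{eq:JKOproblem} identifies $(\rho,\eta)$ as a weak solution of \eqref{eq:Bertsch} in the sense of \cite[Def. 2.6]{CFSS17}; here segregation is used once more, namely $\rho_\tau\,\partial_x\big((1+\delta)\rho_\tau+\eta_\tau\big)=(1+\delta)\rho_\tau\,\partial_x\sigma_\tau$ on $\supp\rho_\tau$, so the limiting flux is $(1+\delta)\rho\,\partial_x\sigma$ and likewise for $\eta$.

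Finally, segregation of the limit is immediate: $\rho_\tau\eta_\tau\equiv0$ a.e.\ for every $\tau$, and strong $L^1$ convergence (hence, along a subsequence, a.e.\ convergence) gives $\rho\eta=0$ a.e.; since the scheme propagates segregation uniformly in $\tau$ this holds on every time slice, so $(\rho,\eta)$ remains segregated for all time. Everything beyond the well-posedness of the scheme is the standard JKO toolbox specialised to the one-dimensional porous-medium structure isolated in the first paragraph, which is why \cite{CFSS17} can be invoked essentially verbatim.
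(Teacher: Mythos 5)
Your overall strategy coincides with the paper's: reduce the JKO step to the porous-medium problem for $\sigma=\rho+\eta$ on segregated configurations, use the sign $\delta<0$ of the cross term to pass between the relaxed energy $\curlyE(\rho,\eta)=\frac{1+\delta}{2}\int_\Omega\sigma^2\,\d x$ and the full energy $\Len$, and outsource compactness, segregation propagation and the limit passage to \cite{CFSS17}. Your treatment of the limit passage and of segregation of the limit matches what the paper does.

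There is, however, one genuine gap in your well-posedness step, caused by the order in which you impose segregation. You minimise the (relaxed $=$ full) JKO functional over the \emph{constrained} class of segregated pairs and then assert, ``as in the proof of Theorem \ref{thm:exminlocal}(3)'', that the resulting segregated minimiser $U^\star$ minimises \eqref{eq:JKOproblem} over \emph{all} competitors. Write $J_{\mathrm{rel}}(U)=\frac{1}{2\tau}d_2^2(U,U^n)+\curlyE(U)$ and $J_{\mathrm{full}}(U)=J_{\mathrm{rel}}(U)-\delta\int_\Omega U_1U_2\,\d x\ge J_{\mathrm{rel}}(U)$. If $\tilde U$ were a possibly mixed competitor with $J_{\mathrm{full}}(\tilde U)<J_{\mathrm{full}}(U^\star)=J_{\mathrm{rel}}(U^\star)$, the contradiction requires $J_{\mathrm{rel}}(\tilde U)\ge J_{\mathrm{rel}}(U^\star)$, i.e.\ that $U^\star$ minimises the \emph{relaxed} functional over \emph{all} pairs, not merely over segregated ones. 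In Theorem \ref{thm:exminlocal}(3) this upgrade is free because $\curlyE$ sees only $\sigma$ and is blind to how the mass is apportioned between the two species; in the JKO step the Wasserstein term $d_2^2(U,U^n)$ \emph{does} distinguish different decompositions of the same $\sigma$, so minimality over segregated pairs does not automatically become minimality over all pairs. The paper closes exactly this hole by running the argument in the opposite order: it first takes the \emph{unconstrained} minimiser of the relaxed problem \eqref{eq:reduced_functional} (existence as in \cite[Lemma 3.3]{CFSS17}, since $\curlyE$ is convex and lower semicontinuous), then invokes \cite[Theorem 3.3]{CFSS17} to conclude that this unconstrained minimiser is automatically segregated whenever the previous iterate is (both components are pushed forward by the same optimal map), and only then applies the comparison argument. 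Your proof becomes correct if you replace the constrained minimisation by this unconstrained-then-segregated step, or equivalently if you supply the missing ingredient that the restricted and unrestricted relaxed minimisations have the same value.
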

\begin{proof}
The proof is based on the splitting strategy in \cite{CFSS17}. In fact, the theory can be applied directly under the above assumptions on the initial data whenever $0<\tau<1$ and by choosing the internal energy density $\chi(x) = x^2/2$ which corresponds to the functional
$$
	\curlyE(\rho, \eta) =  \frac{1+\delta}{2}\int_\Omega(\rho+\eta)^2\d x,
$$
in our case. As for the construction of minimisers, let the previous iteration, $U_\tau^n = (\rho_\tau^n, \eta_\tau^n)$, be given and assume that $U_\tau^n$ is segregated, i.e., $\rho_\tau^n \eta_\tau^n = 0$, almost everywhere. Let
	\begin{align}
		\label{eq:reduced_functional}
		U^\star = (\rho^\star, \eta^\star) \in \operatorname{argmin}\left\{\frac{1}{2\tau} d_2^2(U,U^n) + \frac{1+\delta}{2} \int_{\Omega} (U_1  + U_2)^2 \d x,
		\right\}
	\end{align}
	be given. Note that such a minimiser exists and has the same optimality condition as the problem corresponding to the porous medium equation for one density; see \cite[Lemma 3.3]{CFSS17}. Moreover, by \cite[Theorem 3.3]{CFSS17}, we know that $U^\star$ is segregated, i.e., $\rho^\star \eta^\star = 0$, a.e. in $\Omega$. 
	
	We claim that $U^\star$ is also a minimiser for \eqref{eq:JKOproblem}. Assume it is not and let $\tilde U$ be a competitor with strictly lower energy. Then, 
	\begin{align*}
		0&\leq \frac{1}{2\tau} d_2^2(\tilde U,U^n) + \frac{1+\delta}{2} \int_{\Omega} (\tilde \rho  + \tilde \eta)^2\d x - \delta \int_\Omega \tilde \rho \tilde \eta\d x\\
		&<\frac{1}{2\tau} d_2^2( U^\star,U^n) + \frac{1+\delta}{2} \int_{\Omega} ( \rho^\star  + \eta^\star)^2 \d x - \delta \int_\Omega  \rho^\star \eta^\star\d x.
	\end{align*}
	Using the fact that $U^\star$ is segregated, we can rearrange this inequality to
	\begin{align*}
		0 &\leq  - \delta \int_\Omega \tilde \rho \tilde \eta \d x\\
		&< \frac{1}{2\tau} d_2^2( U^\star,U^n) + \frac{1+\delta}{2} \int_{\Omega} ( \rho^\star  + \eta^\star)^2 \d x \\
		&\phantom{\leq}-\left( \frac{1}{2\tau} d_2^2(\tilde U,U^n) + \frac{1+\delta}{2} \int_{\Omega} (\tilde \rho  + \tilde \eta)^2 \d x \right).
	\end{align*}
	Recalling that $U^\star$ is optimal, cf. Eq. \eqref{eq:reduced_functional}, we reach a contradiction since then $0<0$, and we deduce that $\tilde U$ is not a feasible competitor. Hence, $U^\star$ is not only a minimiser of \eqref{eq:reduced_functional} but also of the original problem \eqref{eq:JKOproblem}. Setting $U^{n+1}=(\rho_\tau^{n+1},\eta_\tau^{n+1}):= U^\star$ concludes the first statement of the lemma. Defining the piecewise constant interpolation
	\begin{align*}
		U_\tau(t,x) := U_\tau^n(x),
	\end{align*}
	for all $(t,x) \in \big[n\tau, (n+1)\tau\big)\times\Omega$ and $(\rho_\tau, \eta_\tau):=U_\tau$. Then, as $\tau \rightarrow 0$, $(\rho_\tau, \eta_\tau)$ converges to a weak solution of \eqref{eq:Bertsch}; see \cite[Theorem 2.9]{CFSS17} and \cite[Lemma 3.14]{CFSS17}.
\end{proof}

\begin{remark}\label{rem:JKOdynamiceqn}
	This result is quite remarkable taking into account the fact that we set out to construct a JKO sequence for 
\begin{align*}
	\pt \rho &= \px \left(\rho \px \left((1+\delta)\rho + \eta \right)\right),\\
	\pt \eta &= \px \left(\eta \px \left(\rho + (1+\delta)\eta\right)\right),
\end{align*}
with $\delta \in (-1,0)$. On the level of the JKO we preserve segregation which prevents terms of the form $\rho \nabla \eta$ and $\eta \nabla \rho$ from appearing in the weak formulation. In the introduction we remarked that the determinant of the mobility has a negative sign only if both species overlap. In a way, constructing an approximation using the minimising movement scheme avoids the  backward diffusion regimes already on the level of the approximations, and, as a consequence, the cross-terms do not appear in the weak formulation. By Remark \ref{rem:noGradFlow}, the solution is not a gradient flow for the functional $\Len$, but for the relaxed functional
\begin{align*}
	\curlyE(\rho, \eta):= \frac{1+\delta}{2}\int_\Omega (\rho + \eta)^2 \d x
\end{align*} 
 An illustration of this behaviour can be seen in the numerical section; cf. Figure \ref{fig:deltaplusone}.
\end{remark}

    
\section{Formation of Gaps in the Non-Local Case}\label{sec:gapstudy}
While Lemma \ref{lem:segregated_minimisers_delta_neg} ensures segregations of minimisers for the nonlocal energy $\NLen$ with $\delta \in (-1,0)$ already, this effect can be more pronounced in the sense that there exists a positive distance between the supports of the two species. 
\subsection{Necessary conditions of the gap size of minimisers}

\begin{theorem}[Necessary conditions of the gap width] Let $\delta \in (-1, 0)$ be fixed and $K$ be a given kernel with  compact support on $[-\alpha,\alpha]$ for $\alpha > 0$. 
Then any minimiser of $\NLen$  satisfies the inequality
\begin{align*}
 (1+\delta)(m_1+m_2)(\rho + \eta) - \delta(m_1 K\star \eta + m_2 K\star \rho) \ge 2 \NLen(\rho,\eta),
\end{align*}
almost everywhere in $\Omega$.
In particular, we necessarily have $r \le \alpha$ whenever there exists a gap of with $2r$. 
\end{theorem}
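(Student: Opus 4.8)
The plan is to derive the Euler--Lagrange system for a minimiser $(\rho,\eta)$ of $\NLen$ by the same perturbation technique used in Theorem~\ref{thm:exminlocal}, read off the stated pointwise inequality by combining the two single--species conditions, and then obtain the gap bound as an elementary consequence of the compact support of $K$.

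\emph{Step 1 (Euler--Lagrange in/equalities).} Since $\NLen$ is quadratic, the first variation in $\rho$ along an admissible $\phi$ (with $\eta$ frozen) is, using that $K$ is even,
\begin{align*}
	\frac1\epsilon\big(\NLen(\rho+\epsilon\phi,\eta)-\NLen(\rho,\eta)\big)=\int_\Omega \phi\,\big[(1+\delta)(\rho+\eta)-\delta\,K\star\eta\big]\,\d x+\curlyO(\epsilon),
\end{align*}
and symmetrically in $\eta$ with $K\star\eta$ replaced by $K\star\rho$. Writing $f_1:=(1+\delta)(\rho+\eta)-\delta\,K\star\eta$ and $f_2:=(1+\delta)(\rho+\eta)-\delta\,K\star\rho$, I would first insert the two--sided, mass-- and positivity--preserving perturbation $\phi(x)=\rho(x)\big(\Psi(x)-\tfrac1{m_1}\int_\Omega\Psi\rho\big)$, $\Psi\in C_0^\infty(\Omega)$, to get $f_1=\lambda_1:=\tfrac1{m_1}\int_\Omega\rho f_1$ on $\supp\rho$; then the one--sided mass--only perturbation $\phi(x)=m_1\Psi(x)-\rho(x)\int_\Omega\Psi$ with $0\le\Psi\in C_0^\infty(\Omega)$ (for which the variation is only $\ge 0$) to get $f_1\ge\lambda_1$ a.e.\ in $\Omega$. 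The same two perturbations in the $\eta$ variable give $f_2=\lambda_2:=\tfrac1{m_2}\int_\Omega\eta f_2$ on $\supp\eta$ and $f_2\ge\lambda_2$ a.e. I expect this step to be the main technical obstacle, exactly as in the local case: one must check admissibility of the perturbations (non--negativity, mass, $L^2$) and, crucially, the correct sign of the one--sided variation that produces the off--support inequalities. No joint perturbation of $\rho$ and $\eta$ is needed here.

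\emph{Step 2 (combining the conditions).} By definition $m_1\lambda_1=\int_\Omega\rho f_1=(1+\delta)\int_\Omega\rho(\rho+\eta)\,\d x-\delta\iint K(x-y)\rho(x)\eta(y)\,\d x\,\d y$, and, using the symmetry of $K$, $m_2\lambda_2=\int_\Omega\eta f_2=(1+\delta)\int_\Omega\eta(\rho+\eta)\,\d x-\delta\iint K(x-y)\rho(x)\eta(y)\,\d x\,\d y$, so that $m_1\lambda_1+m_2\lambda_2=(1+\delta)\int_\Omega(\rho+\eta)^2\,\d x-2\delta\iint K(x-y)\rho(x)\eta(y)\,\d x\,\d y=2\NLen(\rho,\eta)$. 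Since $f_1\ge\lambda_1$ and $f_2\ge\lambda_2$ a.e.\ and $m_1,m_2\ge 0$, multiplying and adding yields, a.e.\ in $\Omega$,
\begin{align*}
	(1+\delta)(m_1+m_2)(\rho+\eta)-\delta\big(m_1\,K\star\eta+m_2\,K\star\rho\big)=m_1f_1+m_2f_2\ge m_1\lambda_1+m_2\lambda_2=2\NLen(\rho,\eta),
\end{align*}
which is the asserted inequality.

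\emph{Step 3 (the gap bound).} First note $\NLen(\rho,\eta)>0$: since $-\delta>0$, $K\ge 0$ and $\rho,\eta\ge 0$, one has $\NLen(\rho,\eta)\ge\tfrac{1+\delta}2\int_\Omega(\rho+\eta)^2\,\d x>0$ because $\rho+\eta$ carries mass $m_1+m_2>0$. Now suppose there is a gap of width $2r$, i.e.\ an interval $I=(c-r,c+r)\subset\Omega$ on which $\rho=\eta=0$ a.e. If $r>\alpha$, then for every $x$ with $|x-c|<r-\alpha$ the set $\{y:|x-y|\le\alpha\}$ is contained in $I$, so $(K\star\rho)(x)=(K\star\eta)(x)=0$ because $\supp K\subset[-\alpha,\alpha]$; since also $(\rho+\eta)(x)=0$ for a.e.\ such $x$, the inequality of Step~2 forces $0\ge 2\NLen(\rho,\eta)$ on a set of positive measure, contradicting $\NLen(\rho,\eta)>0$. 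Hence $r\le\alpha$.
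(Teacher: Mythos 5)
Your proof is correct and follows essentially the same route as the paper: the one-sided, mass-preserving perturbation $\nu=m_1\psi-\rho\int_\Omega\psi$ to get the pointwise Euler--Lagrange inequality, summation of the $\rho$- and $\eta$-conditions to recognise $2\NLen(\rho,\eta)$ on the right, and vanishing of $K\star\rho$, $K\star\eta$ in the interior of a gap wider than $2\alpha$ to force the contradiction. The only (harmless) difference is bookkeeping: you keep $\int\rho(\rho+\eta)$ and $\int\eta(\rho+\eta)$ so that their sum is $\int(\rho+\eta)^2$ without invoking segregation, whereas the paper uses Lemma~\ref{lem:segregated_minimisers_delta_neg} to replace these by $\int\rho^2$ and $\int\eta^2$; you also make explicit the positivity $\NLen(\rho,\eta)>0$ that the paper only asserts.
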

\begin{proof} The strategy of the proof is to consider special variations of the energy as in Theorem \ref{thm:exminlocal}. To this end, fix $\psi \in C^\infty_c(\Omega)$ such that $\psi \geq 0$. Recall that $m_1$ and $m_2$ denote the respective masses of $\rho$ and $\eta$ as before. For given minimisers $\rho$ and $\eta$ we consider the following perturbation 
\begin{align*}
\rho_\eps = \rho + \eps \nu\quad\text{ with }\quad \nu = m_1\psi - \rho \int_\Omega \psi \d x,
\end{align*}
for $0<\epsilon<\|\psi\|_{L^1(\Omega)}^{-1}$. This choice of variations can be easily checked to ensure  that the perturbations, $\rho_\epsilon$, remain in the set of nonnegative measures with given mass. 
by the restriction on $\epsilon$. Calculating the variation of $\NLen$ we obtain
\begin{align}
\label{eq:variation}
	\begin{split}
		&\lim_{\eps \to 0}\frac{\NLen(\rho_\eps,\eta)- \NLen(\rho,\eta)}{\eps}\\
		&\quad = \int_\Omega \psi\left[m_1(1+\delta)(\rho+\eta) - m_1\delta K\star \eta - \int_\Omega (1+\delta)\rho(\rho+\eta) - \delta \rho K\star \eta \d y\right] \d x \ge 0,
	\end{split}
\end{align}
where we have the inequality due to the positivity of the energy functional and the fact that $(\rho,\eta)$ is a minimiser. Using $\psi \ge 0$ in conjunction with the fact that $\rho$ and $\eta$ are segregated, cf. Lemma \ref{lem:segregated_minimisers_delta_neg}, we obtain
 \begin{align*}
 m_1(1+\delta)(\rho+\eta) - m_1\delta K\star \eta - (1+\delta)\int_\Omega \rho^2 \d x + \delta \int_\Omega\rho K\star \eta \d x \ge 0,
\end{align*}
for almost every $x\in\Omega$. Adding a perturbation to $\eta$ instead of $\rho$ yields
\begin{align*}
 m_2(1+\delta)(\rho+\eta) - m_2\delta K\star \rho - (1+\delta)\int_\Omega \eta^2\d x + \delta \int_\Omega\eta K\star \rho \d x \ge 0,
\end{align*}
almost everywhere in $\Omega$.
Adding the equations above and using the definition of $\NLen$ yields
\begin{align}\label{eq:gap_contradiction}
 (1+\delta)(m_1+m_2)(\rho + \eta) - \delta(m_1 K\star \eta + m_2 K\star \rho) \geq 2 \NLen(\rho,\eta).
\end{align}
Now assume that there exists indeed a gap given as 
$$
A:= (-r,r) \subset \Omega \setminus \left( \supp(\rho) \cup \supp(\eta)\right),
$$
for some positive $r>0$. Since $K$ is assumed to be compactly supported on the interval $[-\alpha, \alpha]$, we know that $K\star \rho$ and $K\star \eta$ are supported in $(-r+\alpha,r-\alpha)$. 

If $2r$ were to be strictly greater that $2\alpha$, we could fix a nonempty interval in the gap, on which all terms on the left-hand side of \eqref{eq:gap_contradiction} are zero. This yields a contradiction as we know that $\NLen$ is positive.
\end{proof}

\subsection{Explicit examples for special potentials}
We will examine this behaviour for two special choices of the kernel function $K$ and study the energy
\begin{align*}
	\NLen(\rho, \eta) =\frac{1+\delta}{2}\int(\rho + \eta)^2 \d x - \delta\int \rho K\star \eta \d x,
\end{align*}
for functions $\rho, \eta\in L_+^1([-L,L])$.   Using the same perturbations for $\rho$ and $\eta$ as in the proof of Theorem \ref{thm:exminlocal}, part (2), we get
\begin{align*}
	(1+\delta) (\rho + \eta) - \delta K\star \eta = c_1, \quad \text{ on } \supp (\rho).
\end{align*}
as well as
\begin{align*}
	(1+\delta) (\rho + \eta) - \delta K\star \rho = c_2, \quad \text{ on } \supp (\eta),
\end{align*}
where
\begin{align*}
	\left\{
	\begin{array}{l}
	c_1 = \ds \frac{1}{m_1}\int_\Omega (1+\delta)\rho (\rho + \eta) -\delta \rho K\star\eta\d y,\\[1em]
	c_2 = \ds \frac{1}{m_2}\int_\Omega (1+\delta)\eta (\rho + \eta) -\delta \rho K\star\eta\d y.
	\end{array}
	\right.
\end{align*}
By Lemma \ref{lem:segregated_minimisers_delta_neg}, we already know that minimisers are segregated. Since we are looking for critical points with a positive gap, we assume segregation as well and obtain
\begin{align}
	\label{eqn:necessary_condition_minimiser}
	\left\{
	\begin{array}{rl}
		\ds (1+\delta) \rho - \delta K\star \eta &= c_1\text{ on } \supp (\rho),\\[0.2em]
		\ds (1+\delta) \eta - \delta K\star \rho &= c_2\text{ on } \supp (\eta).
	\end{array}
	\right.
\end{align}
For the rest of this section we consider the symmetric case, i.e., $\eta(x) = \rho(-x)$ for two kernels $K \in \{{\tilde K}, {\bar K}\}$, where
\begin{align}\label{eq:Kpicard}
	{\tilde K}(x) = \frac{1}{2\alpha} \exp\left(-\frac{|x|}{\alpha}\right),
\end{align}
and
\begin{align*}
{\bar K}(x) = \frac{1}{2\alpha}\indicator_{[-\alpha, \alpha]}(x).	
\end{align*}
Note that these kernels are substantially different in the sense that $\supp({\tilde K})=\R$, whereas $\supp({\bar K})=[-\alpha, \alpha]$.
In both cases, we can construct critical points of the energy that are strictly segregated by a gap whose width depends on the range of the kernel as well as the parameters $\alpha$, $\delta\in(-1,0)$ and $L$. At least in the case of the indicator kernel, we are also able to show that these critical points, in the sense of \eqref{eqn:necessary_condition_minimiser}, are indeed minimisers. Our results are summarised as follows.
\begin{proposition}\label{thm:gaps} Given $\Omega = [-L,L]$, let us denote by $(\rho,\eta)$ critical points to $\NLen$, i.e. solutions to \eqref{eqn:necessary_condition_minimiser}, with either $K={\tilde K}$ or $K= {\bar K}$. Then there exist critical points $(\rho,\eta)$ with $\rho(x) = \eta(-x)$ and a positive constant $r \ge 0$ such that 
\begin{align*}
 \supp(\rho) \subset [-L, -r] \text{ and } \supp(\eta) \subset [r, L].
\end{align*}
In particular:
\begin{itemize}
 \item For $K={\tilde K}$ one such critical point is given by
\begin{align*}
 \rho(x) = c_\delta \left( e^{\frac{x+r}{\alpha}} - 1\right),
\end{align*}
with 
\begin{align*}
 c_\delta = \left(\alpha \left(1 - \exp\left(-\frac{L-r}{\alpha}\right)\right)- (L-r)\right)^{-1}
\end{align*}
and 
\begin{align*}
r = L + \alpha\log\left(\frac{\delta+2\sqrt{-(\exp(L/\alpha))^2\delta(1+\delta)}}{(4+4\delta)\exp(2L/\alpha)+\delta)}\right)
\end{align*}
\item For $K= {\bar K}$ one critical point is of the form
\begin{align*}
\rho(x) = b\left[ \cos\left(\lambda x + \frac12 \alpha \lambda\right) + \sin\left(\lambda x + \frac12 \alpha \lambda\right)\right],
\end{align*}
with 
\begin{align*}
 b = h \left[\cos\left(\lambda r - \frac12 \alpha \lambda\right) +\sin\left(\lambda r - \frac12 \alpha \lambda\right)\right]^{-1},
\end{align*}
\begin{align*}
 h = \left(L + r - \alpha - \frac{2}{\lambda}\frac{1}{1 + \cot\left(\lambda r - \frac12 \alpha \lambda\right)}\right)^{-1},\quad \lambda = \frac{\delta}{1+\delta} \frac{1}{2\alpha}, 
\end{align*}
and
\begin{align}\label{eq:rindicator}
 r = \frac\alpha2\left(1  + \frac{1+\delta}{\delta}\pi\right).
\end{align}
In this case, the critical point is in fact a minimiser in the class of segregated densities.
\end{itemize}
\end{proposition}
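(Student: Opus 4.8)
The plan is to construct the claimed profiles by solving the Euler--Lagrange system \eqref{eqn:necessary_condition_minimiser} explicitly under the stated symmetry ansatz, and then to verify feasibility ($\rho\ge 0$, $r\ge 0$, the mass constraint $\int_\Omega\rho\,\d x=m_1$). First I would exploit that both $\tilde K$ and $\bar K$ are even: the reflection $x\mapsto -x$ swaps the two identities in \eqref{eqn:necessary_condition_minimiser}, so imposing $\eta(x)=\rho(-x)$ reduces the system to the single identity $(1+\delta)\rho-\delta K\star\eta=c_1$ on $\supp(\rho)$, with $c_1=c_2$ automatic. Next I would posit the geometry $\supp(\rho)=[-L,-r]$, $\supp(\eta)=[r,L]$ with $r\ge 0$ to be found, and impose the edge condition $\rho(-r)=0$; this is what the one-sided optimality conditions \emph{outside} $\supp(\rho)$ force (compare \eqref{eq:2405191535} and the proof of Theorem \ref{thm:exminlocal}(2)), and it is in any case needed for $\rho$ to be continuous across the gap. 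With the ansatz in place, what is left for each kernel is: express $K\star\eta$ on $[-L,-r]$ in closed form through $\eta=\rho(-\cdot)$; solve the resulting closed equation for $\rho$ and $r$; and fix the one remaining amplitude from the mass constraint.

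For $K=\tilde K$: since $\tilde K$ from \eqref{eq:Kpicard} has full support and $x-z\le -2r\le 0$ for $x\in[-L,-r]$, $z\in[r,L]$, one gets $K\star\eta(x)=\frac{1}{2\alpha}e^{x/\alpha}\int_r^L e^{-z/\alpha}\eta(z)\,\d z=:A\,e^{x/\alpha}$ with $A>0$ constant. The Euler--Lagrange identity then reads $(1+\delta)\rho(x)=c_1+\delta A e^{x/\alpha}$ on $[-L,-r]$, and together with $\rho(-r)=0$ this gives $c_1=-\delta A e^{-r/\alpha}$ and $\rho(x)=c_\delta(e^{(x+r)/\alpha}-1)$ with $c_\delta=\delta A e^{-r/\alpha}/(1+\delta)<0$, the stated form. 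Substituting this $\rho$ into $A=\frac{1}{2\alpha}\int_{-L}^{-r}e^{y/\alpha}\rho(y)\,\d y$ (this being $\frac{1}{2\alpha}\int_r^L e^{-z/\alpha}\eta(z)\,\d z$ rewritten via $\eta=\rho(-\cdot)$), the factor $c_\delta$ cancels and one is left with a single quadratic equation for $u:=e^{r/\alpha}$ in terms of $v:=e^{L/\alpha}$ and $\delta$; its root with $1\le u\le v$ (equivalently $0\le r\le L$) is exactly the displayed $r$. Positivity $\rho\ge 0$ on $[-L,-r]$ is immediate since $c_\delta<0$ and $e^{(x+r)/\alpha}-1\le 0$ there, and the mass constraint then pins $c_\delta$ to the displayed value. (When the quadratic yields $u<1$ one simply takes $r=0$ and the supports touch.)

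For $K=\bar K$: the kernel has range $\alpha$, so $K\star\eta$ vanishes on $[-L,r-\alpha)$, while on the interaction zone $I:=[r-\alpha,-r]$ one finds $K\star\eta(x)=\frac{1}{2\alpha}\int_{-x-\alpha}^{-r}\rho(y)\,\d y$ (again using $\eta=\rho(-\cdot)$). Hence $\rho\equiv c_1/(1+\delta)$ on $[-L,r-\alpha)$, and on $I$ --- which is invariant under the reflection $x\mapsto -x-\alpha$ with fixed point $-\alpha/2$ --- differentiating the Euler--Lagrange identity gives the first-order relation $\rho'(x)=\lambda\,\rho(-x-\alpha)$ with $\lambda=\frac{\delta}{1+\delta}\frac{1}{2\alpha}$; differentiating once more and reapplying the relation turns this into $\rho''=-\lambda^2\rho$ on $I$. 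The general solution $A\cos\lambda x+B\sin\lambda x$ is pinned down to $\rho(x)=b\big[\cos(\lambda x+\tfrac12\alpha\lambda)+\sin(\lambda x+\tfrac12\alpha\lambda)\big]$ exactly by also demanding the first-order relation (equivalently, centring the phase at $-\alpha/2$). Continuity at $x=r-\alpha$ then relates the flat value $h:=c_1/(1+\delta)$ to $b$ as in the proposition; the edge condition $\rho(-r)=0$ reduces to $\tan(\lambda r-\tfrac12\alpha\lambda)=1$, whose first solution $\lambda r-\tfrac12\alpha\lambda=\pi/4$ is exactly \eqref{eq:rindicator}; and the mass constraint fixes $h$. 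Finally $\rho\ge 0$ on $[-L,-r]$ because on $I$ the phase $\lambda(x+\alpha/2)$ sweeps $[-\pi/4,\pi/4]$, where $\cos+\sin\ge 0$, and $b>0$ (as $\lambda<0$); one checks $r\ge 0$ in the corresponding subrange of $\delta$ and that everything is feasible once $L$ is large compared with $\alpha$.

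It remains, for $K=\bar K$, to upgrade this critical point to a minimiser among segregated densities. By Lemma \ref{lem:nonlocal_deltaneg_existence_minimisers} a minimiser exists, and by Lemma \ref{lem:segregated_minimisers_delta_neg} it is segregated; the point is to identify it with the explicit construction. On segregated pairs $\NLen(\rho,\eta)=\frac{1+\delta}{2}\big(\|\rho\|_{L^2(\Omega)}^2+\|\eta\|_{L^2(\Omega)}^2\big)-\delta\iint\bar K(x-y)\rho(x)\eta(y)\,\d x\,\d y$, and since $-\delta>0$ and $\bar K\ge 0$ is compactly supported, the interaction term is decreased by pulling the two supports apart; together with the $x\mapsto -x$ symmetry of the problem (recall $m_1=m_2$ under the symmetric ansatz) a reflection/rearrangement argument that ``un-interleaves'' the two species reduces the minimisation to the symmetric ordered class $\supp(\rho)\subset[-L,0]$, $\supp(\eta)\subset[0,L]$, $\eta=\rho(-\cdot)$, on which the Euler--Lagrange analysis above determines the profile and the gap uniquely. \textbf{I expect this last step to be the main obstacle}: since $\NLen$ is not convex for $\delta<0$ (cf.\ Proposition \ref{prop:nonlsc} and the sign of the diffusion determinant in the introduction), one cannot pass from the first-order conditions to global minimality by convexity, so the indefinite bilinear interaction term has to be controlled directly, exploiting the compact support of $\bar K$ together with the fact that the gap already separates most of the two supports.
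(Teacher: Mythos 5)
Your construction of the critical points is sound and is essentially the paper's argument, only written out in more detail: the paper merely remarks that $\tilde K$ and $\bar K$ are fundamental solutions of $\id-\alpha^2\partial_x^2$ and of $\partial_x$ respectively, imposes $\rho(-r)=0$, and states the formulas "after some calculations". Your direct evaluation of $K\star\eta$ on the separated supports is, if anything, cleaner for $\tilde K$, since it explains immediately why only the $e^{x/\alpha}$ mode survives (for $x\in\supp\rho$ and $z\in\supp\eta$ one has $x-z\le 0$), rather than having to discard the $e^{-x/\alpha}$ branch of the second-order ODE afterwards; and for $\bar K$ your relation $\rho'(x)=\lambda\rho(-x-\alpha)$ on the interaction zone is exactly what the distributional identity $\partial_x\bar K=\tfrac{1}{2\alpha}(\delta_{-\alpha}-\delta_\alpha)$ produces. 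This part matches the paper.

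The genuine divergence is in the final step, and it is where you are making the problem harder than the statement requires. The proposition only claims minimality "in the class of segregated densities", and the paper's proof makes this precise as: minimality against perturbations $u,v$ with $\supp(u)\subset[-L,-r]$, $\supp(v)\subset[r,L]$ and $\int u=\int v=0$. For such perturbations the expansion of $\NLen(\rho+u,\eta+v)$ is \emph{exact and finite}: the linear terms are $c_1\int v+c_2\int u=0$ by \eqref{eqn:necessary_condition_minimiser} and the zero-mass condition, the new cross term $-\delta\int u\,\bar K\star v\,\d x$ is argued to vanish because the two supports are separated relative to the range of $\bar K$, and what remains is $\tfrac{1+\delta}{2}\int(u+v)^2\ge 0$. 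No convexity, no existence theory, and no rearrangement/identification argument is needed --- so the "main obstacle" you flag dissolves once the class of competitors is restricted as in the statement. Your proposed route (existence of a global minimiser, segregation, then un-interleaving to identify it with the explicit profile) would be proving a strictly stronger statement and is not carried out in the paper; as written it is a real gap in your proposal. One caveat worth recording: the vanishing of the cross term rests on comparing $\alpha$ with $2r$, and your own computation of the interaction zone $[r-\alpha,-r]$ (nonempty precisely when $2r\le\alpha$, which is what \eqref{eq:rindicator} gives for $\delta\in(-1,0)$) sits in tension with the inequality $\alpha<2r$ invoked in the paper at this point; so the control of the bilinear term is indeed the delicate spot, but the resolution is the restricted-class expansion above, not a global rearrangement.
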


\begin{remark} 
\label{rem:criticaldelta}
In the case of the indicator kernel, equation \eqref{eq:rindicator} implies that, independent of the value of $\alpha$, positive gaps only appear for 
\begin{align}
	\label{eq:criticaldelta}
	\delta < \delta_{{\rm critical}} = -\frac{\pi}{1+\pi}\approx -0.7585.	
\end{align}
In this case, the size of the gap grows linearly in $\alpha$. For the Picard kernel, there also exists a threshold which will however depend on $L$ and $\alpha$ and the size of the gap depends non-linearly on $\alpha$ as well. These findings are confirmed by numerical simulations in Figures \ref{fig:gap_indicator} and \ref{fig:gap_picard} in Section \ref{sec:gapstudy}.
\end{remark}

\begin{proof}
The proof is based on the fact that ${\tilde K}$ and ${\bar K}$ are fundamental solutions to differential equations. Indeed, ${\tilde K}$ is a solution for the operator $\id - \alpha^2 \partial_x^2$, i.e.,
\begin{align*}
	\left(\id - \alpha^2 \frac{\partial^2}{\partial x^2}\right) {\tilde K} = \delta_0,
\end{align*}
where $\delta_0$ denotes the Dirac distribution at $x=0$. For ${\bar K}$ we have 
\begin{align*}
	\frac{\partial}{\partial x} K(x) = \frac{1}{2\alpha}\left(\delta_{-\alpha} - \delta_{\alpha}\right).
\end{align*}
Imposing the additional boundary condition $\rho(-r) = 0$ (and thus $\eta(r) = 0$), we obtain, after some calculations, the explicit forms above. In order to show that for $K=\bar K$ the critical point is indeed a minimiser in the class of all functions segregated with gap $r$, we proceed as follows: Denote by $u,\,v$ perturbations such that
\begin{align*}
 \supp(u) = [-L,-r],\; \supp(v) = [r,L]\quad \text{ and } \quad\int_{-L}^{-r} u\d x = \int_r^L v\d x = 0.
\end{align*}
Then we have, with $\rho$ and $\eta$ given as in the statement of Proposition \ref{thm:gaps},
\begin{align*}
 \NLen(\rho+u,\eta + v) &=  \NLen(\rho,\eta) + \frac{1+\delta}{2}\int_{-L}^L 2(\rho+\eta)(u+v) + (u+v)^2\d x\\
 &-\delta \int_{-L}^L v \bar K\star \rho + u \bar K\star \eta + u \bar K\star v\d x.
\end{align*}
Rearranging terms and using \eqref{eqn:necessary_condition_minimiser}, we obtain
\begin{align*}
	\NLen(\rho+u,\eta + v) &=  \NLen(\rho,\eta) + \frac{1+\delta}{2}\int_{-L}^L (u+v)^2\d x - \delta \int_{-L}^L u\bar K\star v\d x\\
&+ c_1\int_{-L}^L v\d x + + c_2\int_{-L}^L u\d x.
\end{align*}
As the perturbations $u$ and $v$ have zero mass and as their supports are disjoint with $\eta$ and $\rho$ respectively, all terms in the second line above are zero. In remains to examine the convolution term. Using the definitions of $\bar K$, we have
\begin{align*}
 -\delta \int_{-L}^L u \bar K\star v\d x = -\delta \int_{-L}^{-r} u(x) \int_{-\alpha}^\alpha v(x-y)\d y \d x.
\end{align*}
However, from the explicit for of $r$ given in \eqref{eq:rindicator}, we see that we always have $\alpha < 2r$, so that in the second integral above, $x-y$ will never lie in the support of $v$ and thus, the whole integral is zero. We arrive at 
\begin{align*}
	\NLen(\rho+u,\eta + v) &=  \NLen(\rho,\eta) + \frac{1+\delta}{2}\int_{-L}^L (u+v)^2\d x,
\end{align*}
which shows that the critical point given in in the statement of Proposition \ref{thm:gaps} is indeed a minimiser.
\end{proof}


\section{Numerical Study}\label{sec:numerics}
This section is dedicated to an extensive numerical study to confirm the results obtained in the previous section. To solve the time-dependent problem we use the finite volume scheme recently introduced in \cite{CFS18}. This scheme is particularly suited for our purposes since it preserves a discrete energy inequality and was able to accurately reproduce segregated solutions. In order to numerically calculate minimisers of the energies $\Len$ and $\NLen$ we use a projected steepest descent scheme applied to the Lagrangian that consists of the respective functional plus appropriate Lagrange multipliers to ensure the mass constraint. In each step, a projection is performed to ensure that the densities remain non-negative.

\subsection{Dynamical Behaviour}\label{sec:dynamical}

\subsubsection{Comparison of $\delta\in(-1,0)$ and $\delta=0$ with Reduced Diffusion Coefficient in the Local System}
We start our study by examining the dynamical behaviour of solutions. In particular, we compare solutions to the full system to \eqref{eq:system} with those of 
\begin{align}
\label{eq:deltazero_diffusioncoefficient}	
\begin{split}
	\pt \rho &= (1+\delta)\px\left(\rho \px(\rho + \eta))\right),\\
	\pt \eta &= (1+\delta)\px\left(\eta \px(\rho + \eta))\right),
\end{split}
\end{align}
i.e., the $\delta = 0$ version of \eqref{eq:system} just with an reduced overall diffusion coefficient of $(1+\delta)$. This is closely related to the analytic problems related to the terms $ \rho\nabla \eta \text{ and } \eta\nabla\rho$, as discussed in the introduction.
Indeed, given the results of Remark \ref{rem:JKOdynamiceqn}, we expect that weak solutions of \eqref{eq:deltazero_diffusioncoefficient} are also solutions of the original system \eqref{eq:system}. As least numerically, this in the case as demonstrated in Figure \ref{fig:deltaplusone}.

\begin{figure}[ht!]
\includegraphics[width=0.75\textwidth]{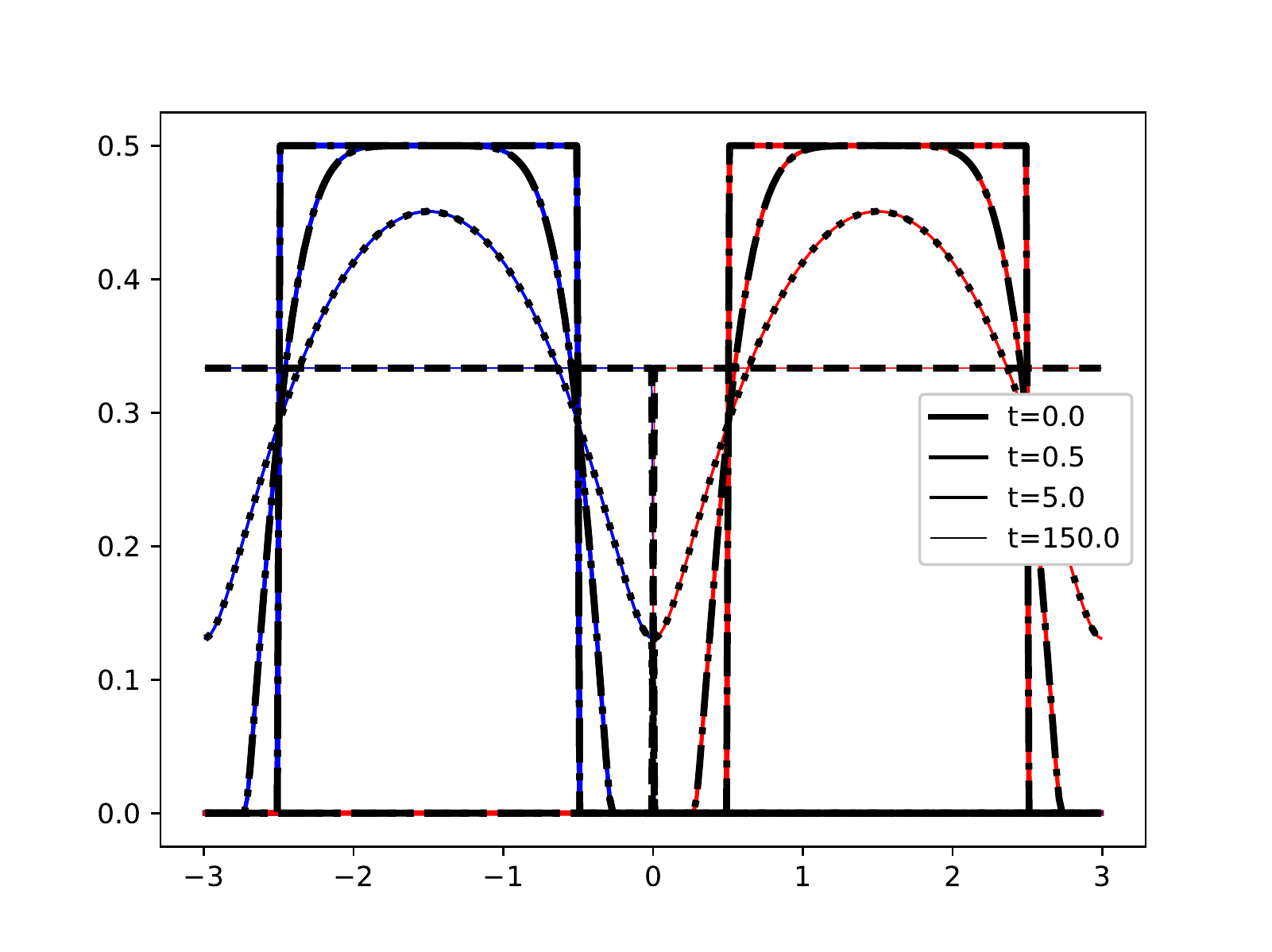}
	\caption{The blue and the red curves correspond to the solution of system \eqref{eq:system} for the local functional $\Len$. The dotted and dashed lines correspond to the solution of \eqref{eq:deltazero_diffusioncoefficient}. In both cases we chose $\delta = -0.9$.}
	\label{fig:deltaplusone}
\end{figure}

\subsubsection{The Local System in Different Regimes of $\delta$}

\begin{figure}[ht!]
	\centering
	\subfigure[$\delta=-0.9$.]{
		\includegraphics[width=0.3\textwidth]{./sol_ex_delta_neg_wo_non_local}
	}
	\subfigure[$\delta=0$.]{
		\includegraphics[width=0.3\textwidth]{./sol_ex_delta_zero_wo_non_local}
	}
	\subfigure[$\delta=0.9$.]{
		\includegraphics[width=0.3\textwidth]{./sol_ex_delta_pos_wo_non_local}
	}
	\caption{Starting with the same initially segregated initial data $\rho=\indicator_{[-2.5,\,-0.5]}$ and $\eta=\indicator_{[0.5,\,2.5]}$ we study the evolution of the local system \eqref{eq:system} corresponding to $\curlyF = \Len$ for $\delta$ in the three different regimes.}
	\label{fig:PDElocal_deltas}
\end{figure}
Next we consider the behaviour of the local system, i.e., \eqref{eq:system} with $\curlyF = \Len$ for different values of $\delta$ in the interval $(-1,0)$. The outcome is depicted in Figure \ref{fig:PDElocal_deltas}. As expected from our analytical results for the local energy in Theorem \ref{thm:exminlocal}, we observe that for long times and $\delta \in (-1,0)$, we obtain segregated states whose sum is constant and which fill the whole domain. For $\delta >0$ on the other hand, we observe mixing of $\rho$ and $\eta$ and, for long times, convergence to constant stationary states determined by the size of the domain and the mass of the initial data. Only in the case $\delta = 0$ both mixing and segregation are possible and the behaviour is dictated by the initial data. In \cite{CFSS17}, the authors proved that segregated data remain segregated. This behaviour is also seen in Figure \ref{fig:PDElocal_deltas} (b). In addition, Figure \ref{fig:mixseg} displays the evolution for a mixed initial data. The numerically obtained stationary state consists of regions of coexistence as well as regions of single occupation.
\begin{figure}
	\centering
	\subfigure[$t=0$.]{
		\includegraphics[width=0.3\textwidth]{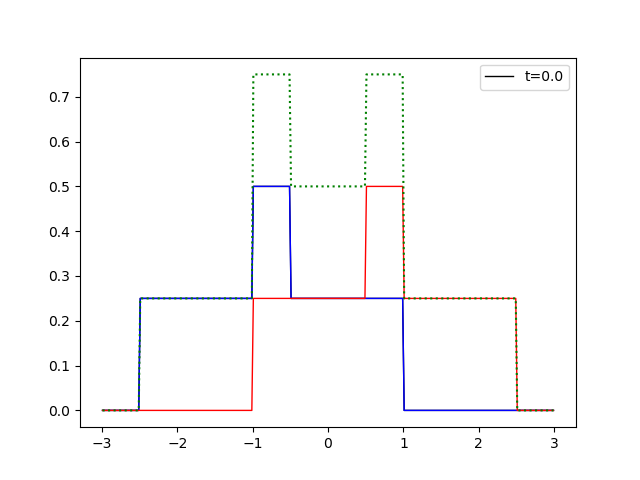}
	}
	\subfigure[$t=1$.]{
		\includegraphics[width=0.3\textwidth]{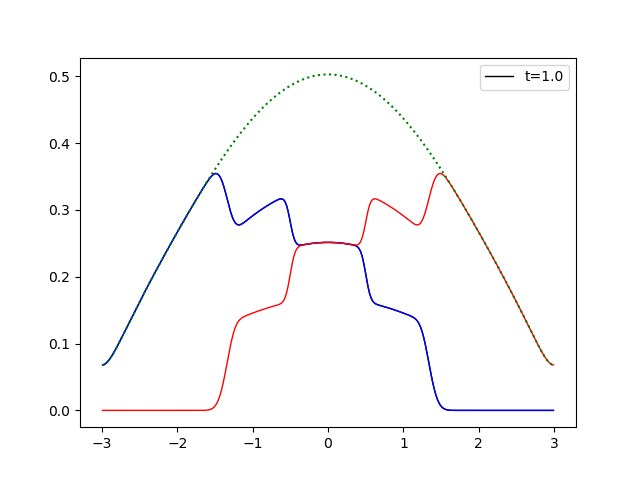}
	}
	\subfigure[$t=20$.]{
		\includegraphics[width=0.3\textwidth]{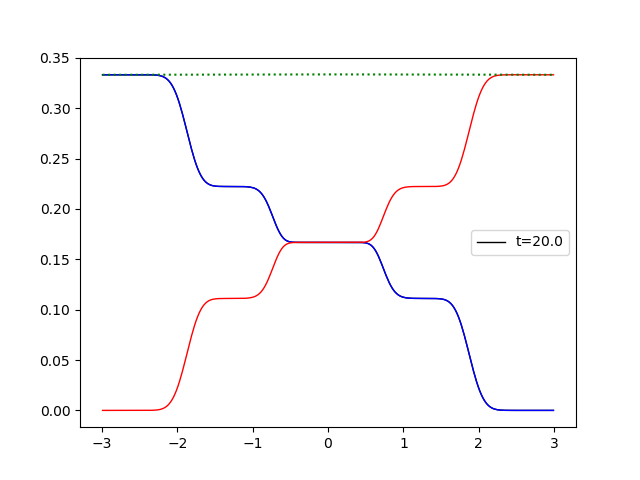}
	}
	\caption{The initial data $\rho_0(x) = \frac14\indicator_{[-2.5,-0.5]} + \frac14\indicator_{[-1, 1]}$ (blue) and $\eta_0(x)=\frac14\indicator_{[-1,1]} + \frac14\indicator_{[0.5, 2.5]}$ (red) are partially mixed. The sum (green) behaves like a Barenblatt profile and approaches a constant. Yet, the individual species remain mixed in the middle of the domain with regions of individual occupation on the sides of the domain.}
	\label{fig:mixseg}
\end{figure}

\subsubsection{The Non-local System in Different Regimes of $\delta$} Next we study the behaviour in the non-local case with the Picard kernel defined in \eqref{eq:Kpicard}, chosing $\alpha=2$. The results are presented in Figure \ref{fig:PDEnonlocal} and confirm that for $\delta = -0.9$ we observe the formation of gaps for long times.
\begin{figure}[ht!]
	\centering
	\subfigure[$\delta=-0.9$.]{
		\includegraphics[width=0.3\textwidth]{./sol_ex_delta_neg_w_non_local}
	}
	\subfigure[$\delta=0$.]{
		\includegraphics[width=0.3\textwidth]{./sol_ex_delta_zero_w_non_local}
	}
	\subfigure[$\delta=0.9$.]{
		\includegraphics[width=0.3\textwidth]{./sol_ex_delta_pos_w_non_local}
	}
	\caption{Starting with the same initially segregated initial data $\rho=\indicator_{[-2.5,\,-0.5]}$ and $\eta=\indicator_{[0.5,\,2.5]}$ we study the evolution of the non-local system \eqref{eq:system} corresponding to $\curlyF = \NLen$ for $K=K_{{\rm Picard}}$, $\alpha = 2$, and for $\delta$ in the three different regimes.}
	\label{fig:PDEnonlocal}
\end{figure}

\subsection{Gap Study}
\label{sec:gapstudy}
The aim of this section is to show that the critical points presented in Theorem \ref{thm:gaps} can indeed be observed numerically, both as minimisers of the energy and as stationary solutions to the PDE system \eqref{eq:system}. In Figures \ref{fig:gap_indicator0}-\ref{fig:gap_indicator} and \ref{fig:gap_picard0}-\ref{fig:gap_picard}, we compare for different values of $\alpha$ and $\delta$ the explicit formulas to numerical minimisers and see that they are  indistinguishable and we study the gap width $r$ as a function of $\alpha$ and $\delta$ . In Figure \ref{fig:gap_PDE}, we have the same result for solutions of the PDE at $t=6000$ where we used approximated Dirac distributions, centred at $L=\pm 5$ as initial data.

\begin{figure}[ht!]
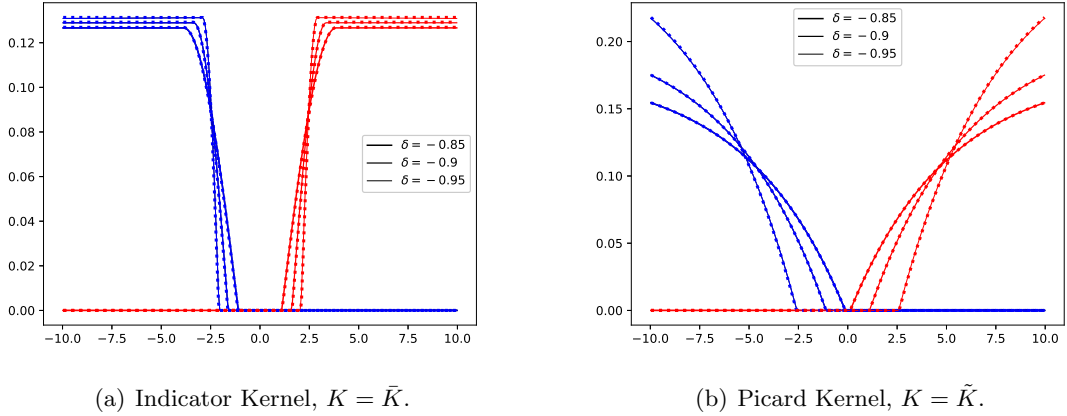

	\centering
	\subfigure[Indicator Kernel, $K=\bar K$.]
	{
		\includegraphics[width=0.45\textwidth]{./ComparisonPDEsVSCriticalPoints_Indicator}
	}
	\subfigure[Picard Kernel, $K = \tilde K$.]
	{
		\includegraphics[width=0.45\textwidth]{./ComparisonPDEsVSCriticalPoints_Picard}
	}
	\caption{Comparison of solution to the PDE system \eqref{eq:system} (solid lined) with the critical points computed in Section \ref{sec:gapstudy} (dotted lines), cf. Proposition \ref{thm:gaps}. In both cases ($K\in\{\tilde K, \bar K\}$) the system is initialised with $\rho_0(x) = \frac12 \indicator_{[-6,-4]}(x)$ and $\eta(x) = \frac12 \indicator_{[4,6]}(x)$, and we chose $\alpha = 5$.}
	\label{fig:gap_PDE}
\end{figure}

\begin{figure}[ht!]
	\centering
	\subfigure[Keeping $\delta=-0.9$ fixed while varying $\alpha$.]
	{
		\includegraphics[width=0.45\textwidth]{./IndicatorKernel_FixedDelta_VaryingKernelWidth}
	}
	\subfigure[Keeping $\alpha=5$ fixed while varying $\delta$.]
	{
		\includegraphics[width=0.45\textwidth]{./IndicatorKernel_FixedKernel_VaryingDelta}
	}
	\caption{Indicator function as kernel, $K={\bar K}$. The straight lines correspond to the solutions of the numerical minimisation initialised by $\rho(x) = \frac12\indicator_{[-2,0]}(x)$ and $\eta(x)=\frac12 \indicator_{[0,2]}(x)$, the dotted lines correspond to the critical points computed in Section \ref{sec:gapstudy}, cf. Proposition \ref{thm:gaps}.}
	\label{fig:gap_indicator0}
\end{figure}
%
%
\begin{figure}[ht!]
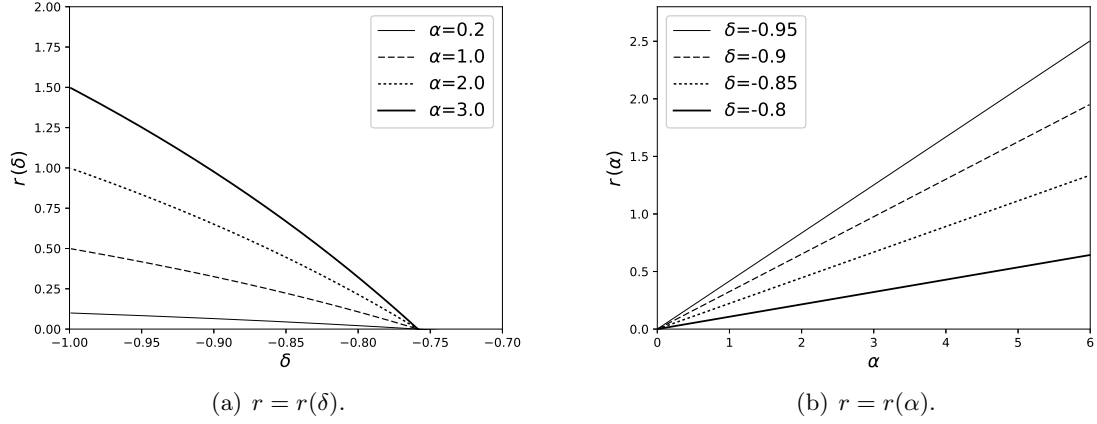

	\centering
	\subfigure[$r=r(\delta)$.]
	{
		\includegraphics[width=0.45\textwidth]{./IndicatorKernel_r_of_delta}
	}
	\subfigure[$r=r(\alpha)$.]
	{
		\includegraphics[width=0.45\textwidth]{./IndicatorKernel_r_of_alpha}
	}
	\caption{Case of the indicator kernel: the gap width, $r$, as a function of $\alpha$ and $\delta$. Note that for fixed $\alpha$, the critical value is $\delta=-0.7585$, cf. Remark \ref{rem:criticaldelta}, Eq. \eqref{eq:criticaldelta}.}
	\label{fig:gap_indicator}
\end{figure}

\begin{figure}[ht!]
	\centering
	\subfigure[Keeping $\delta=0-0.9$ fixed while varying $\alpha$.]
	{
		\includegraphics[width=0.45\textwidth]{./PicardKernel_FixedDelta_VaryingKernelWidth}
	}
	\subfigure[Keeping $\alpha=5$ fixed while varying $\delta$.]
	{
		\includegraphics[width=0.45\textwidth]{./PicardKernel_FixedKernel_VaryingDelta}
	}
	\caption{Picard function as kernel, $K={\tilde K}$. The straight lines correspond to the solutions of the numerical minimisation initialised by  $\rho(x) = \frac12\indicator_{[-2,0]}(x)$ and $\eta(x)=\frac12 \indicator_{[0,2]}(x)$, the dotted lines correspond to the critical points computed in Section \ref{sec:gapstudy}, Proposition \ref{thm:gaps}.}
	\label{fig:gap_picard0}
\end{figure}

\begin{figure}[ht!]
	\centering
	\subfigure[$r=r(\delta)$.]
	{
		\includegraphics[width=0.45\textwidth]{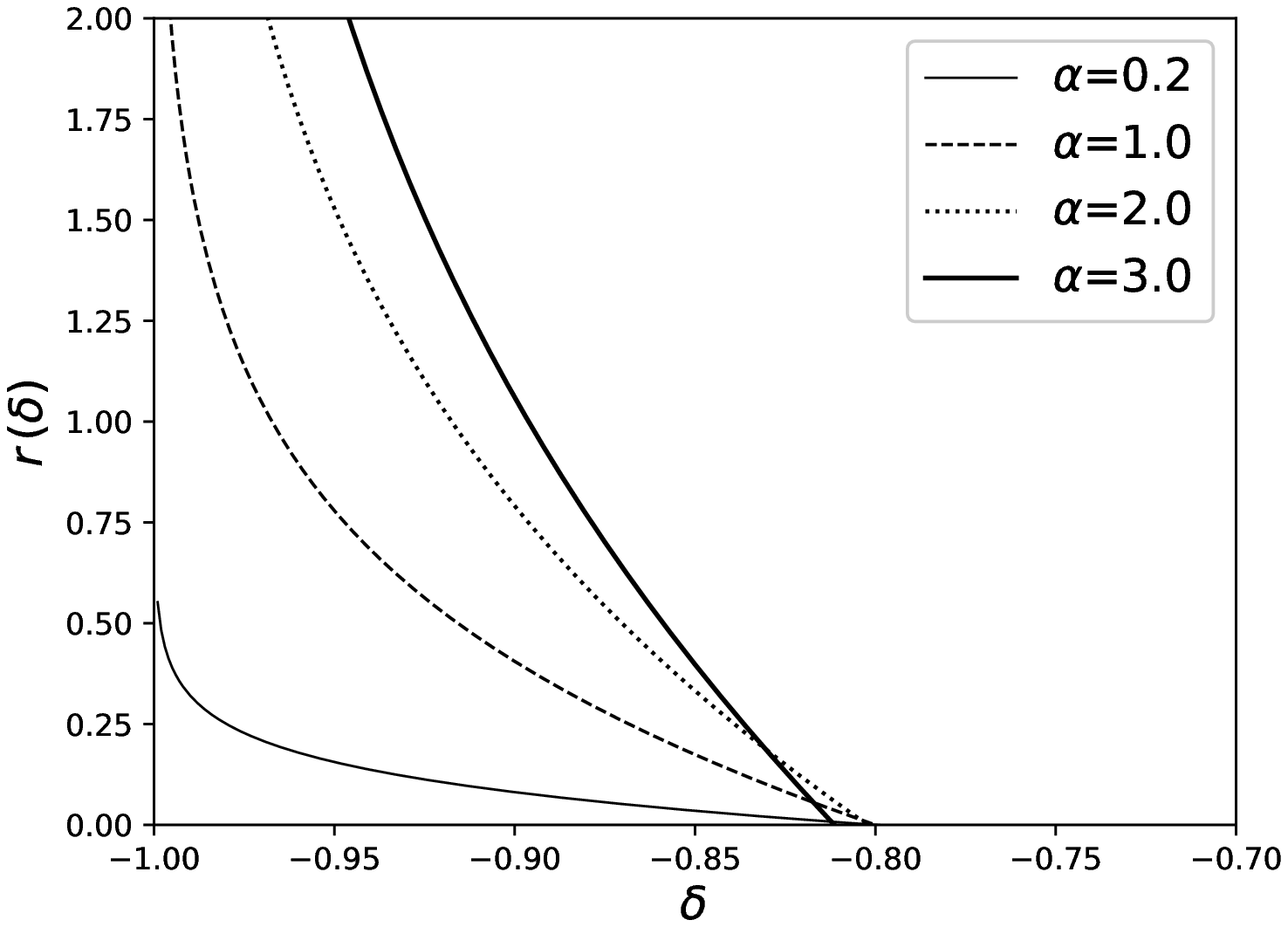}
	}
	\subfigure[$r=r(\alpha)$.]
	{
		\includegraphics[width=0.45\textwidth]{./PicardKernel_r_of_alpha}
	}
	\caption{Case of the Picard kernel: the gap width, $r$, depends on both $\alpha$ and $\delta$, cf. Proposition \ref{thm:gaps}.}
	\label{fig:gap_picard}
\end{figure}

\section*{Acknowledgements}
The work of MB has been supported by ERC via Grant EU FP 7 - ERC Consolidator Grant 615216 LifeInverse. MB and JFP acknowledge support by the German Science Foundation DFG via EXC 1003 Cells in Motion Cluster of Excellence, M\"unster. JAC was partially supported by the Royal Society by a Wolfson Research Merit Award and the EPSRC grant EP/P031587/1.  MS acknowledges the funding by the `Santander Mobility Award' and `Cells in Motion'. 

\bibliographystyle{abbrv}
\def\cprime{$'$}

\end{document}